\newcommand{\diag}{{\rm diag}}
\newcommand{\tr}{{\rm tr}}
\newtheorem{Def}{Definition}
\newtheorem{Thm}{Theorem}
\newtheorem{Asu}{Assumption}
\newtheorem{Lem}{Lemma}
\newtheorem{Rem}{Remark}
\newtheorem{Step}{Step}
\newtheorem{Prm}{Problem}
\begin{document}
%
\title{Leader-based Optimal Coordination Control for the Consensus Problem of Multi-agent Differential Games via Fuzzy Adaptive Dynamic
Programming}

%
%
%

\author{Huaguang~Zhang,~
        Jilie~Zhang, Guang-Hong~Yang,~
        and~Yanhong ~Luo
\thanks{Huaguang~Zhang, Jilie Zhang, Guang-Hong Yang and Yanhong Luo are with the College of Information Science and Engineering, Northeastern University, Shenyang, Liaoning, 110819, P.\,R. China (hgzhang@ieee.org; jilie0226@163.com; yangguanghong@ise.neu.edu.cn; neuluo@163.com). Huaguang Zhang and Guang-Hong~Yang are also with State Key Laboratory of Synthetical Automation for Process Industries (Northeastern University), Shenyang, Liaoning, 110004, China.}

}

%
%

\markboth{IEEE TRANSACTIONS ON FUZZY SYSTEMS, VOL. XX, NO. X, FEBRUARY 20XX}%
{ZHANG \MakeLowercase{\textit{et al.}}: LEADER-BASED OPTIMAL COORDINATION CONTROL FOR THE CONSENSUS PROBLEM}
%

\maketitle

\begin{abstract}
In this paper, a new online scheme is presented to design the optimal coordination control for the consensus problem of multi-agent differential games by fuzzy adaptive dynamic programming (FADP), which brings together game theory, generalized fuzzy hyperbolic model (GFHM) and adaptive dynamic programming (ADP). In general, the optimal coordination control for multi-agent differential games is the solution of the coupled Hamilton-Jacobi (HJ) equations. Here, for the first time, GFHMs are used to approximate the solutions (value functions) of the coupled HJ equations, based on policy iteration (PI) algorithm. Namely, for each agent, GFHM is used to capture the mapping between the local consensus error and local value function. Since our scheme uses the single-network architecture for each agent (which eliminates the action network model compared with dual-network architecture), it is a more reasonable architecture for multi-agent systems. Furthermore, the approximation solution is utilized to obtain the optimal coordination control. Finally, we give the stability analysis for our scheme, and prove the weight estimation error and the local consensus error are uniformly ultimately bounded (UUB). Further, the control node trajectory is proven to be cooperative uniformly ultimately bounded (CUUB).
\end{abstract}


\begin{IEEEkeywords}
Optimal coordination control, Consensus problem, Multi-agent
differential game, Fuzzy adaptive dynamic programming, Generalized
fuzzy hyperbolic model.
\end{IEEEkeywords}

%
\IEEEpeerreviewmaketitle



\section{Introduction}
In recent decades, the consensus problems of multi-agent systems (for instance, formation control \cite{Lin2005}, flocking \cite{Lee2006,Zhu2013}, rendezvous \cite{Lin2004} and sensor networks \cite{Xiao2005,Cao2013} and so on) have received
considerable attention, such as \cite{Meng2013a,Trecate2006} and \cite{Meng2013}. In the early days, consensus problems originated
from computer science and formed the foundation of the field of distributed computing \cite{Lynch1996}. Subsequently, these problems
were developed to management science and statistics \cite{DeGroot1974}. Now, references \cite{Borkar1982} and \cite{Tsitsiklis1984} in 1980s are referred to as the pioneering work on consensus problems for control theory. In \cite{Saber2004}, Olfati-Saber and Murray presented the fundamental framework for solving consensus problems for multi-agent systems. And overviews \cite{Ren2005} and \cite{Saber2007} have summarized the recent achievements of coordination control for consensus problems of multi-agent systems.

In \cite{Ren2005}, for consensus problems, Ren et al. proposed an open research problem, that is, how to design the optimal coordination control, which not only makes multi-agent systems stable, but also minimizes their performance indexes. In a physical sense, the optimal coordination control makes every agent use up the least amount of energy, and makes them reach a consensus. In fact, every agent depends on the actions of itself and all its neighborhood agents. Therefore, every agent requires to choose a control to minimize its own performance index by acting on itself, according to the outcomes of its neighborhood agents. It is similar to the multi-player cooperative game.

Game theory \cite{Tijs2003} studies strategic decision making problems. More formally, it is ``the study of mathematical models of conflict and cooperation between intelligent rational decision-makers." In general, if it is cooperative games, the communication among players is allowed. The decision for each player depends on the actions of himself and all the other players. In the early days, game theory was used widely for solving the problem of multi-player games, such as, \cite{Wei2011} and \cite{Vamvoudakis2011}. Recently, game theory has also become the theoretical basis in the field of multi-agent games in \cite{Vamvoudakisa2012,Kazerooni2009,Parsons2002}. The evolution of the agents' state variables is governed by differential equations. The problem of finding an optimal strategy in a differential game is closely related to the optimal control theory. In particular, the closed-loop strategies can be found by Bellman's dynamic programming method, such as \cite{Wei2011,Vamvoudakis2011,Vamvoudakisa2012}. For multi-agent systems, since every agent's action depends on the outcomes of itself and all the neighborhood agents, the coupled Hamilton-Jacobi (HJ) equations are set up. Therefore, for
multi-agent differential games, the optimal coordination control relies on solving the coupled HJ equations. However, in general, it
is very difficult.

Therefore, in this paper, ADP algorithm (\cite{Zhang2011} and \cite{Wei2012}), which combines adaptive control and reinforcement learning, is introduced to learn the solution of HJ equations online for multi-agent systems. The excellent overview of the state-of-the-art developments of ADP algorithm has been presented in \cite{Kirk2004,Si2004,Wang2004,Lewisi2009}. How to approximate the value function is a key problem in the ADP algorithm. Based on Weierstrass higher-order approximation theorem \cite{Rudin1976}, we know that $N$ complete basis can be used to approximate the solution of the Hamilton-Jacobi-Bellman (HJB) equation by linear expression, as $N \rightarrow \infty$. For finite $N$, however, the approximation theorem will be sensitive to the chosen basis. If a smooth function can not be spanned by finite $N$ independent basis sets, then the group of basis sets will not be able to strictly approximate the function. Therefore, we want to choose a group of independent basis as better as possible to capture the significant features of the value function. Traditionally, neural networks are used as the approximator for it. However, neural networks do not have the clear physical significance, and activation functions (basis functions) are manually chosen. So we do not know whether the selected activation function is appropriate. It motives us to circumvent the disadvantage by using fuzzy approximation technology (fuzzy approximator). The fuzzy approximation technology can characterize the value function more reasonably by the knowledge from human experts and experiments. The generalized fuzzy hyperbolic model (GFHM) is a better selection as a function approximator \cite{Zhang2001a,Zhang2006,Zhang2001} which has clear physical significance \cite{Margaliot2003} (It is easy to construct an GFHM if we know some linguistic information about the relationship between function output and the input variables), and the model weights can be optimized by adaptive learning. Specially, GFHM transforms the problem (that is, how to choose basis functions in neural network model) into how to translate the input variables. In this way, the entire input space can be covered as much as possible by choosing sufficient and proper generalized input variables. So, GFHM is a better approximator for estimating the value function, such as \cite{Zhang2012a} and \cite{Zhang2013}.

In recent years, some optimal control methods have been proposed for the multi-agent consensus problem, such as the linear quadratic regulator (LQR) technology \cite{Zhang2012} and the model predictive control (MPC) technology \cite{Trecate2009}. However, the method in\cite{Zhang2012} is only limited to the linear systems and is an off-line design procedure. Though the method in \cite{Trecate2009} has obtained a good on-line controller for single- and double-integrator multi-agent systems (specially, the time-varying communication network), the continuous sampling and real-time predictive processes are required, and the method gets a control sequence for the finite horizon. By the way, \cite{Trecate2009} addresses the case that agents are discrete-time systems with leaderless. Here, we deal with the
continuous nonlinear consensus problem with the leader online through using the ADP algorithm. The algorithm can solve the coupled
HJ equations directly by the policy iteration and adaptive control methods, and simultaneously avoiding the sampling and repeated predictive processes in \cite{Trecate2009}. In addition, we get an optimal function relationship of control for the infinite horizon,
when the ADP algorithm does not change the adjustable weight of control.

In this paper, our major idea is to utilize game theory to solve the optimal coordination control problem for multi-agent systems based
on adaptive dynamic programming. By Bellman's dynamic programming method, we construct the coupled HJ equations for multi-agent
differential games. To obtain the solution of the coupled HJ equations, GFHMs are used to approximate the value functions (solution) under the framework of PI algorithm \cite{Bertsekas1996}. It results in the errors of the coupled HJ equations. To minimize the errors resulting from GFHM approximators, the gradient descent is used to update weights of these GFHM approximators. The update of weights is implemented continuously until they do not change. We call it fuzzy adaptive dynamic programming (FADP). Finally, we analyze the stability conditions and prove the weight error and the local consensus error are uniformly ultimately bounded (UUB).

The contributions of the paper include:
\begin{enumerate}
   \item[1.] The cooperative problem of multi-player games is developed to the coordination consensus control problem of nonlinear multi-agent systems. The paper builds a relationship between the optimal consensus problem for multi-agent systems and Nash equilibrium of cooperative game theory.
   \item[2.] The coupled Hamilton-Jacobi equations for multi-agent systems are established by Bellman's dynamic programming, and then the stability analysis is developed for our scheme.
   \item[3.] The open problem, i.e., the optimal consensus problem for multi-agent systems presented in \cite{Ren2005}, is solved by fuzzy adaptive dynamic programming with single-network architecture for the first time. Namely, only one GFHM is used to approximate the local value function for each agent.
   \item[4.] The proposed single-network architecture eliminates the action network model and reduces the number of updated weights, compared with the dual-network architecture (that in \cite{Zhang2011} and \cite{KyriakosG2010}).
\end{enumerate}

The rest of this paper is organized as follows. In section \ref{sec2}, some definitions and notions are given. The local consensus dynamic error system is established in section \ref{sec3}. The coupled Hamilton-Jacobi equations for multi-agent systems are deduced, the stability of Nash equilibrium is proven and the coupled HJ equations are solved by PI algorithm in section \ref{sec4}. Section \ref{sec6} derives the approximation coupled HJ equations by using GFHMs. Section\ref{sec7} gives stability analysis for our scheme and proves the weight estimation error and the local consensus error are UUB, and the control node trajectory is CUUB. Finally, a numerical example is given to illustrate the effectiveness of our scheme.

\section{Preliminaries}\label{sec2}
The purpose of this section is to provide the foundations of graph theory, information consensus and generalized fuzzy hyperbolic model.
\subsection{Graph Theory}
In this paper, graph theory is used to analyse the multi-agent systems as a very
helpful mathematical tool. Regardless of the unidirectional
information flow or bidirectional one, the topology of a
communication network can be expressed by a weighted graph.

Let $\mathcal{G} = (\mathcal{V}, \mathcal{E}, \mathcal{A})$ be a
weighted graph of $N$ nodes with the nonempty finite set of nodes
$\mathcal{V}=\{{v}_1,\ldots,{v}_N\}$, where set of edges
$\mathcal{E}$ belongs to the product space of $\mathcal{V}$  (i.e.
$\mathcal{E} \subseteq \mathcal{V} \times \mathcal{V}$), an edge of
$\mathcal{G}$ is denoted by $e_{ij}=(v_j,v_i)$, which is a direct
path from node $j$ to node $i$, and $\mathcal{A}=[a_{ij}]$ is a
weighted adjacency matrix with nonnegative adjacency elements, i.e.,
$a_{ij} \geq 0$, $e_{ij} \in \mathcal{E} \Leftrightarrow a_{ij} >
0$, otherwise $a_{ij} = 0$. The node index $i$ belongs to a finite
index set $\mathcal{I}=\{1,2,\ldots,N\}$.

\begin{Def}[Laplacian Matrix]\label{D1} The graph Laplacian matrix $L=[l_{ij}]$ is defined
as $\mathcal{D}-\mathcal{A}$, with $\mathcal{D} = \diag \{d_i\} \in
R^{N \times N}$ being the in-degree matrix of graph, where $d_i =
\sum_{j=1}^N a_{ij}$ is in-degree of node $v_i$ in graph.
\end{Def}
\begin{Rem}\label{R1}
Laplacian matrix has all row sums equal to zero.
\end{Rem}

In this paper, we assume the graph is simple, e.g. no repeated edges
and no self loops. The set of neighbors of node $v_i$ is denoted by
$N_i=\{v_j\in\mathcal{V}:(v_j,v_i)\in\mathcal{E}\}$. A graph is
referred to as a spanning tree, if there is a node $v_i$ (called the
root), such that there is a directed path from the root to any other
nodes in the graph. A digraph is said to be strongly connected, if
there is a directed path from node $i$ to node $j$, for all distinct
nodes $v_i, v_j \in \mathcal{V}$. A digraph has a
spanning tree if it is strongly connected, but not vice versa.

Here, we focus on the strongly connected communication digraph with fixed topology.

\subsection{Consensus for Networks of Agents}
A multi-agent system is a network which consists of a group of
agents. Every agent is called as a node in network. Let $x_i \in
{R}^n$  denote the state of node $v_i$. We call $\mathcal{G}_x =
(\mathcal{G},x)$ (with the state $x \in {R}^{N n}$) a network (or
algebraic graph), where $x=[x_1^T,\ldots,x_N^T]^T$. The state of a
node might represent the physical quantity of the agent, such as
altitude, velocity, angle, voltage and so on. We say nodes of a
network have reached a consensus if and only if $x_i=x_j$ for all
$i,j \in \mathcal{I}, i\neq j$. For the consensus problem with
leader, every node requires $x_i(t) \rightarrow x_0(t)$, $\forall i
\in \mathcal{I}$, where $x_0(t)$ is state trajectory of the leader.

\subsection{Generalized Fuzzy Hyperbolic Model}
\begin{Def}\label{D2}
Given a plant with $n$ input variables $x=[x_1(t),\ldots, x_n(t)]^T$
and an output variable $y$. We call the fuzzy rule base the
generalized fuzzy hyperbolic rule base if it satisfies the following
conditions:
\begin{enumerate}
\item The $l_{th}$ fuzzy rule takes the following form $(l= 1 ,\ldots, 2^m)$:
\begin{eqnarray*}
R^l: &&\mbox{IF}\  (x_1-d_{11})\  \mbox{is}\  F_{x_{11}},  \ldots,(x_1-d_{1w_1})\ \mbox{is}\ F_{x_{1w_1}} \mbox{,} (x_2-d_{21})\mbox{is}\  F_{x_{21}}\mbox{,} \ldots \mbox{,}\nonumber \\ 
&&(x_2-d_{2w_2})\ \mbox{is}\ F_{x_{2w_2}} \mbox{,}\ldots \mbox{,} (x_n-d_{n1})\  \mbox{is}\  F_{x_{n1}} \mbox{,} \ldots \mbox{, and} (x_n-d_{nw_n})\ \mbox{is}\ F_{x_{nw_n}}. \nonumber \\
&&\mbox{THEN}\ \ y^l = c_{F_{{11}}} + \ldots + c_{F_{{1w_1}}}+\ldots+ c_{F_{{n1}}} + \ldots + c_{F_{{nw_n}}},  \nonumber
\end{eqnarray*}
where, $w_z (z=1, \ldots, n)$ represents the number of
transformations associated with each $x_z$, and $d_{zj}(z =
1,\ldots, n, j=1,\ldots ,w_z)$ are constants that define the
transformations, $F_{x_{zj}}$ are fuzzy sets of $x_z-d_{zj}$ which
include subsets $P_z$ (positive) and $N_z$ (negative), and
$c_{F_{zj}}$ are constants
corresponding to $F_{x_{zj}}$. \\
\item The constants $c_{F_{zj}} (z = 1, \ldots, n, j = 1, \ldots,w_z)$ in the THEN-part correspond
to $F_{x_{zj}}$ in the IF-part. That is, if there is $F_{x_{zj}}$ in
the IF-part, $c_{F_{zj}}$ must appear in the THEN-part; otherwise,
$c_{F_{zj}}$ does not appear in the THEN-part.\\
\item There are $s = 2^m$ fuzzy rules in the rule base, where $m = \sum ^n_{i=1}w_i$ that is,
all the possible $P_z$ and $N_z$ combinations of input variables in
the IF-part and all the linear combinations of constants in the
THEN-part.
\end{enumerate}
\end{Def}
\begin{Lem}\cite{Zhang2006,Zhang2001a}\label{L1}
For a multiple input single output system, $y = f(x_1, x_2,\ldots,
x_n)$, define the generalized input variables as
$$\bar{x}_i=x_z-d_{zj},\quad i=1,\ldots,m $$
and the generalized fuzzy hyperbolic rule base as in Definition \ref{D2}, respectively, where the membership functions of the
generalized input variables {$P_z$} and {$N_z$} are defined as
\begin{align*}
\mu_{P_{z}}({x}_z)=e^{-\frac{1}{2}({x}_z-\phi_z)^2},\quad \mu_{N_{z}}({x}_z)=e^{-\frac{1}{2}({x}_z+\phi_z)^2},
\end{align*}
where $\phi_z >0$.

We can then derive the following model:
\begin{eqnarray*}
y=\theta^T \tanh(\Phi \bar{x})+\zeta,
\end{eqnarray*}
where $\theta=[\theta_1,\ldots,\theta_m]^T$ is an ideal vector; $\tanh(\Phi \bar{x})=[\tanh(\phi_1 \bar{x}_1), \ldots , \tanh(\phi_m
\bar{x}_m)]^T$ with $\Phi = \diag\{\phi_i\}$ ($i=1,\ldots,m$) and $\bar{x}=[\bar{x}_1,\ldots,\bar{x}_m]^T$; and $\zeta$ is a constant scalar. We call it as generalized fuzzy hyperbolic model (GFHM).
\end{Lem}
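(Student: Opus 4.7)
The plan is to derive the stated hyperbolic form by running the standard singleton-fuzzifier, product-inference, weighted-average-defuzzifier pipeline over the $s=2^m$ rules prescribed in Definition~\ref{D2}, and then to collapse the rational expression that results using the Gaussian shape of $\mu_{P}$ and $\mu_{N}$. A natural bookkeeping device is to index each rule by a sign pattern $\sigma=(\sigma_1,\ldots,\sigma_m)\in\{P,N\}^m$ over the $m$ generalized inputs $\bar{x}_i$; the firing strength is then $\prod_{i=1}^m \mu_{\sigma_i}(\bar{x}_i)$, and by the additive structure of the consequent in Definition~\ref{D2} the rule output is $y^\sigma=\sum_{i=1}^m c_{\sigma_i}$ with $c_{\sigma_i}\in\{c_{P_i},c_{N_i}\}$ chosen to match the antecedent.

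Writing out the defuzzified output gives
$$y \;=\; \frac{\sum_{\sigma\in\{P,N\}^m}\bigl(\sum_{i=1}^m c_{\sigma_i}\bigr)\prod_{i=1}^m \mu_{\sigma_i}(\bar{x}_i)}{\sum_{\sigma\in\{P,N\}^m}\prod_{i=1}^m \mu_{\sigma_i}(\bar{x}_i)}.$$
The denominator factors as $\prod_{i=1}^m\bigl(\mu_{P_i}(\bar{x}_i)+\mu_{N_i}(\bar{x}_i)\bigr)$ because the sum over $\sigma$ distributes coordinatewise. For the numerator, I would swap the two sums, pull $c_{\sigma_i}$ outside the $j\neq i$ factors, and observe that summing over $\sigma_j$ for $j\neq i$ yields $\prod_{j\neq i}\bigl(\mu_{P_j}(\bar{x}_j)+\mu_{N_j}(\bar{x}_j)\bigr)$. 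After cancellation of this common factor, the expression decouples into a single sum over coordinates:
$$y \;=\; \sum_{i=1}^m \frac{c_{P_i}\,\mu_{P_i}(\bar{x}_i)+c_{N_i}\,\mu_{N_i}(\bar{x}_i)}{\mu_{P_i}(\bar{x}_i)+\mu_{N_i}(\bar{x}_i)}.$$

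The closing step exploits the Gaussian identity
$$\frac{e^{-\frac12(\bar{x}-\phi)^2}-e^{-\frac12(\bar{x}+\phi)^2}}{e^{-\frac12(\bar{x}-\phi)^2}+e^{-\frac12(\bar{x}+\phi)^2}}=\tanh(\phi\bar{x}),$$
obtained by factoring $e^{-\frac12(\bar{x}^2+\phi^2)}$ from numerator and denominator. Rewriting the $i$th summand as $\tfrac12(c_{P_i}+c_{N_i})+\tfrac12(c_{P_i}-c_{N_i})\tanh(\phi_i\bar{x}_i)$, defining $\theta_i:=\tfrac12(c_{P_i}-c_{N_i})$ and $\zeta:=\tfrac12\sum_{i=1}^m(c_{P_i}+c_{N_i})$, produces the claimed model $y=\theta^T\tanh(\Phi\bar{x})+\zeta$.

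The main obstacle is the combinatorial step that splits the $2^m$-term numerator into $m$ decoupled coordinate contributions. This relies critically on \emph{both} the completeness of the rule base (every one of the $2^m$ sign patterns appears exactly once) \emph{and} the additive consequent structure enforced by Definition~\ref{D2}; if either is relaxed, decoupling fails and no closed-form $\tanh$ expansion exists. Once the decoupling is established, the hyperbolic-tangent identity is a one-line verification.
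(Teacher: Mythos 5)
Your derivation is correct and is exactly the standard argument for this result: the paper itself gives no proof of Lemma~\ref{L1} (it is imported from \cite{Zhang2006,Zhang2001a}), and the proof in those references proceeds precisely as you do, via product inference and weighted-average defuzzification over the complete $2^m$-rule base, coordinatewise factorization of numerator and denominator, and the Gaussian-ratio identity yielding $\tanh(\phi_i\bar{x}_i)$ with $\theta_i=\tfrac12(c_{P_i}-c_{N_i})$ and $\zeta=\tfrac12\sum_i(c_{P_i}+c_{N_i})$. Your observation that the decoupling hinges on both the completeness of the rule base and the additive consequent structure is also the right thing to emphasize.
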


\begin{Lem}\cite{Zhang2006}\label{L2}
Let $F$ be the set of all generalized fuzzy hyperbolic model given
by Lemma \ref{L1}. For any given real continuous function $f(x)$ on
the compact set ${U}\subset{R}^n$ and any arbitrary $\delta > 0$,
there exists a $h(x)\in{F}$ such that $$\sup\limits _{x \in
{U}}|f(x)-h(x)|<\delta.$$
\end{Lem}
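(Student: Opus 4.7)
The statement is a universal-approximation theorem, so my plan is a Stone–Weierstrass argument on the uniform closure $\overline{F} \subset C(U)$: once I show $\overline{F}$ is a subalgebra that contains the constants and separates points of $U$, density is automatic and the desired $h \in F$ with $\sup_U|f-h|<\delta$ exists. The elementary ingredients I would clear first. The constant term $\zeta$ in the GFHM form of Lemma~\ref{L1} puts every real constant in $F$ (take $\theta=0$). Concatenating two generalized-input lists and stacking their weight vectors shows $F$ is closed under pointwise addition and scalar multiplication, so the vector-space half is free. For point separation, given distinct $x,y \in U$ I choose a coordinate $z$ with $x_z \neq y_z$ and use the one-term model $h(v)=\tanh(\phi(v_z-d))$; strict monotonicity of $\tanh$ gives $h(x)\neq h(y)$ for generic $\phi$ and $d$.

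The main obstacle, and the step on which I expect to spend most of the effort, is multiplicative closure of $\overline{F}$. A product of two $\tanh$ atoms centred on different coordinates is not itself a finite sum of single-coordinate $\tanh$ atoms, so $F$ is not literally an algebra and I must instead show that such products can be uniformly approximated from $F$. My plan is to handle the univariate case first — density of $\{\sum_i \theta_i \tanh(\phi_i(\bar{x}-d_i)) + \zeta\}$ in $C(I)$ on a compact interval $I$ is a Cybenko-type result because $\tanh$ is a bounded, strictly monotone, non-polynomial sigmoid — and then exploit the flexibility in Definition~\ref{D2}, which allows arbitrarily many shifts $d_{zj}$ and scales $\phi_{zj}$ per coordinate, to lift the univariate density into a tensor-product scheme that can approximate cross-coordinate products arbitrarily well in the sup norm. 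Combining these approximations over $\mathcal{O}(m)$ generalized inputs with sufficient density of the grid $\{d_{zj}\}$ in the relevant projections of $U$ should close the algebra within $\overline{F}$.

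With the algebra property established alongside constants and point separation, Stone–Weierstrass yields $\overline{F}=C(U)$, so for any given $f \in C(U)$ and $\delta>0$ there is $h \in F$ with $\sup_{x\in U}|f(x)-h(x)|<\delta$, which is the claim. If the multiplicative-closure step proves unwieldy in detail, the fallback is to quote directly the construction from \cite{Zhang2006} from which Lemma~\ref{L2} is taken, since the density proof there is carried out for exactly this GFHM family.
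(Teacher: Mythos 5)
First, a point of comparison: the paper does not prove Lemma~\ref{L2} at all. It is imported verbatim from \cite{Zhang2006} (the remark immediately after it says ``see \cite{Zhang2006} for details''), so the ``fallback'' you mention at the end is in fact exactly what the authors do, and there is no in-paper argument to measure yours against. Judged on its own terms, your Stone--Weierstrass plan handles the easy parts correctly: constants via $\zeta$, closure under linear combinations by concatenating generalized-input lists, and point separation by a single term $\tanh(\phi(x_z-d))$. You are also right that multiplicative closure of $\overline{F}$ is the crux.

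That crux, however, is where the proposal breaks, and not merely because the step is ``unwieldy.'' By Lemma~\ref{L1} every $h\in F$ has the form $h(x)=\sum_{i=1}^{m}\theta_i\tanh\bigl(\phi_i(x_{z(i)}-d_i)\bigr)+\zeta$, i.e.\ a finite sum of univariate functions of the individual coordinates, $h(x)=\sum_{z=1}^{n}g_z(x_z)+\zeta$. Adding more shifts $d_{zj}$ and scales $\phi_{zj}$ enlarges each $g_z$ but never leaves this additively separable class, and the obstruction survives uniform limits: with the remaining coordinates frozen, the functional $\Lambda f=f(a,b)-f(a,b')-f(a',b)+f(a',b')$ is continuous on $C(U)$ and vanishes on every additively separable function, hence on all of $\overline{F}$, while $\Lambda(x_1x_2)=(a-a')(b-b')\neq0$. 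So for $n\ge2$ the product of two atoms living on different coordinates is provably \emph{not} in $\overline{F}$; no grid refinement of the $d_{zj}$ can manufacture a tensor product $g_1(x_1)g_2(x_2)$, because that is exactly the kind of term the model form of Lemma~\ref{L1} excludes. Consequently $\overline{F}$ cannot be shown to be a subalgebra and Stone--Weierstrass does not apply; your argument is complete only for $n=1$, where the Cybenko-type univariate density you invoke already gives the result without Stone--Weierstrass. Indeed, the same functional shows that if $F$ is literally the class of Lemma~\ref{L1}, the density assertion fails on $U=[0,1]^n$ for $n\ge2$, so any valid proof of Lemma~\ref{L2} must rest on structure of the GFHM beyond the additive form displayed here --- which is presumably why the paper leaves the statement as a citation rather than proving it.
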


\begin{Rem}
Lemma \ref{L2} shows that
GFHM can uniformly approximate any nonlinear function over $U$ to
any degree of accuracy if $U$ is compact, that is, the GFHM is a
universal approximator (see \cite{Zhang2006} for details).
Therefore, GFHM can approximate the function with error bound, by
sufficient and proper generalized input variables which cover the
entire space as much as possible. Here, the sufficient and
proper translational quantity of input variables requires to be chosen by
expertise or manual selection.
\end{Rem}

\section{Consensus error dynamic system}\label{sec3}
Consider multi-agent systems with $N$ agents in the form of
communication network $\mathcal{G}_x$. Their node dynamics are
\begin{eqnarray}\label{E1}
\dot{x}_i=f(x_i)+g_i(x_i)u_i,
\end{eqnarray}
where $x_i(t) \in R^n$ is the state of node $v_i$, $u_i(t)\in
R^{m_i}$ is the input coordination control. $f(x_i) \in R^n$ and
$g_i(x_i)\in R^{n\times m_i}$, such that $f(0)=0$ and $f+g_iu_i$ contains the origin ($\|g_i(x_i)\|<\beta_i$, $\|\cdot\|$
is the Euclidean norm).

The global network dynamics is
\begin{eqnarray}\label{E2}
\dot{x}=f(x)+g(x)u,
\end{eqnarray}
where the global state vector of the multi-agent system (\ref{E2})
is $x = [x_1^T, x_2^T, \cdots, x_N^T]^T \in R^{Nn}$, the global
nodedynamics vector is $f(x) = [f^T (x_1), f^T (x_2), \cdots, f^T
(x_N)]^T \in R^{Nn}$, $g(x) = \diag \{g_i (x_i)\} \in R^{Nn\times
M}$ with $i \in \mathcal{I}$ and the global control input $u =
[u_1^T, u_2^T,\cdots, u_N^T ]^T \in R^M$($M=m_1+\cdots+m_N$). $N$ is
the number of the nodes.

The state of the control node (or leader) is $x_0(t)$ which
satisfies the dynamics
\begin{eqnarray}\label{E3}
\dot{x}_0=f(x_0),
\end{eqnarray}
where $x_0(t) \in R^n$, $f(x_0)$ is the differentiable function.

The local neighborhood consensus error $e_i$ for node $v_i$ is
defined as
\begin{eqnarray}\label{E4}
e_i=\sum \limits_{j\in N_i}a_{ij}(x_i-x_j)+b_i(x_i-x_0),
\end{eqnarray}
where $e_i=[e_{i1},e_{i2},\ldots,e_{in}]^T$ ($e_i\in R^n$). $b_i$ is
the pinning gain ($b_i \geq 0$). Note that $b_i > 0$ for at least
one $i$. Then $b_i = 0$ if and only if there is not a direct path
from the control node to the $i_{th}$ node in $\mathcal{G}_x$;
otherwise $b_i > 0$. The nodes $v_i$ ($b_i \neq 0$) are referred to
as the pinned or controlled nodes.

\begin{Rem}\label{R3} The local neighborhood consensus error $e_i$ represents the
information whether node $v_i$ agrees on the leader and its
neighbors, that is, whether the multi-agent system reach a
consensus, $e_i \rightarrow 0$ as $t \rightarrow \infty$.
\end{Rem}

The global error vector for the network $\mathcal{G}_x$ is
\begin{align}\label{E5}
e=&(L \otimes I_n)x + (B \otimes I_n)(x-\underline{x}_0)\nonumber\\
=&(L \otimes I_n)x-(L \otimes I_n)\underline{x}_0+(B \otimes
I_n)(x-\underline{x}_0)\nonumber\\
=&((L+B) \otimes I_n)(x-\underline{x}_0)\nonumber\\
=&\mathcal{L}(x-\underline{x}_0),
\end{align}
with $\mathcal{L}=(L+B) \otimes I_n$ ($I_n$ is an identity matrix
with $n$ dimensions), where $L$ is the Laplacian matrix for the
network $\mathcal{G}_x$; $e = [e_1^T, e_2^T, \cdots, e_N^T]^T$ $\in
R^{Nn}$ and $\underline{x}_0 = \underline{\emph{I}}x_0 \in R^{Nn}$,
with $\underline{\emph{I}}=\underline{1} \otimes \emph{I}_n \in
R^{Nn \times n}$ and $\underline{1}$ is the N-vector of ones;
$B=[b_{ij}] \in R^{N \times N}$ is a diagonal matrix with diagonal
entries $b_i$ (i.e. $b_{ii}=b_i$). $\otimes$ is the Kronecker
product operator. Differentiating (\ref{E4}) or (\ref{E5}), the
dynamics of local neighborhood consensus error for network
$\mathcal{G}_x$ are given by
\begin{align}\label{E6}
\dot{e}_i=&((L_i+B_i) \otimes
I_n)(\dot{x}-\dot{\underline{x}}_0)\nonumber\\
=&((L_i+B_i) \otimes
I_n)(f(x)+g(x)u-\underline{f}(x_0))\nonumber\\
=&((L_i+B_i) \otimes I_n)(f_e(t)+g(x)u)\nonumber\\
=& \sum_{j\in \mathcal{I}}((l_{ij}+b_{ij})\otimes
I_n)(f_{ej}(t)+g_j(x_j)u_j)\nonumber\\
 =&((l_{ii}+b_{ii})\otimes I_n)(f_{ei}(t)+g_i(x_i)u_i) + \sum_{j\in N_i}((l_{ij}+b_{ij})\otimes I_n)(f_{ej}(t)+g_j(x_j)u_j),
\end{align}
where $f_e(t)=f(x)-\underline{f}(x_0)$ with
$\underline{f}(x_0)=\underline{I}f(x_0)$,
$f_{ei}(t)=f(x_i)-{f}(x_0)$ and $f_{ej}(t)=f(x_j)-{f}(x_0), (j \in
N_i)$. $L_i$ is denoted as a row vector which is the $i_{th}$ row
vector of the Laplacian matrix $L$, that is, $L_i=[l_{i1}, \ldots,
l_{ii}, \ldots, l_{iN}]$. Similarly, $B_i=[b_{i1}, \ldots,
b_{ii},\ldots b_{iN}]$.
\begin{Rem}\label{R4}
Since $a_{ij}$ is zero when the node $v_j$ is not the neighbor of
node $v_i$, the expressions (\ref{E6}) only contain control inputs
of all the neighbors of node $v_i$ and itself in network
$\mathcal{G}_x$. In fact, it denotes that the local neighborhood
consensus error depends on the states and the control inputs from
node $v_i$ and all of its neighbors.
\end{Rem}
\begin{Def}(Uniformly Ultimately Bounded (UUB))
The local neighborhood consensus error $e_i(t)\in R^n$ is uniformly
ultimately bounded (UUB) if there exists a compact set $\Omega_i \in
R^n$ so that $\forall e_i(t_0) \in \Omega_i$  there exists a bound
$\mathcal{B}_i$ and a time $t_{fi} (\mathcal{B}_i, e_i(t_0))$, both
independent of $t_0\geq 0$, such that $\|e_i(t)\|\leq \mathcal{B}_i
\  \forall t\geq t_0 + t_{fi}$.
\end{Def}
\begin{Def}(Cooperative Uniformly Ultimately Bounded (CUUB))\cite{Das2010}\label{D6}
The control node trajectory $x_0(t)$ given by (\ref{E3}) is
cooperative uniformly ultimately bounded (CUUB) with respect to
solutions of node dynamics (\ref{E1}) if there exists a compact set
$\Omega \subset R^n$ so that $\forall (x_i(t_0)-x_0(t_0)) \in
\Omega$, there exist a bound $\mathcal{C}$ and a time $t_f
(\mathcal{C}, (x_i(t_0)- x_0(t_0)))$, both independent of $t_0\geq
0$, such that $ \|x_i(t)- x_0(t)\| \leq \mathcal{C}$ $\forall i,
\forall t \geq t_0 + t_f $.
\end{Def}

\section{Optimal Coordination Control}\label{sec4}
To reach a consensus while simultaneously minimizing the local
performance index of every agent, we use the machinery of $N$-person
cooperative games (\cite{Vamvoudakis2011,Vamvoudakisa2012}) to design the optimal coordination control for the systems (\ref{E6}).

\subsection{The Coupled HJ Equation}
Define the local performance indexes (cost functionals) by
\begin{align}\label{E7}
J_i(e_i(0),u_i,u_{(j)})=& \int_0^\infty r_i(e_i,u_i,u_{(j)})dt \nonumber \\
=&\int_0^\infty (e_i^T Q_{ii} e_i + \sum \limits_{j\in\mathcal{I}}
u_j^TR_{ij}u_j)dt \nonumber \\
=&\int_0^\infty (e_i^T Q_{ii} e_i + u_i^T R_{ii} u_i +\sum
\limits_{j\in\mathcal{I},j\neq i} u_j^TR_{ij}u_j)dt\nonumber \\
=&\int_0^\infty (e_i^T Q_{ii} e_i + u_i^T R_{ii} u_i +\sum
\limits_{j\in N_i} u_j^TR_{ij}u_j)dt,
\end{align}
with $r_i(e_i,u_i,u_{(j)})=e_i^T Q_{ii} e_i + u_i^T R_{ii} u_i+\sum
\limits_{j\in N_i} u_j^TR_{ij}u_j$. $u_{(j)}$ are the control input
vectors $\{u_j : j \in N_i\}$ of the neighbors of node $v_i$.

All weighting matrices are constant and satisfy
$Q_{ii}>0$, $R_{ii}> 0$ and $R_{ij}\geq 0$. Note that if $u_{j}$ is
the control inputs of the neighbors of node $v_i$, then $R_{ij}> 0$,
vice versa. Otherwise, $R_{ij}= 0$. In other words, the  performance
index $i$ depends on the input information of node $v_i$ and its
neighbors.
\begin{Prm} \label{P1}The problem required to be solved is that how to design the
local optimal coordination control to minimize the local performance
indexes (\ref{E7}) subject to (\ref{E6}) and make all nodes (agents)
reach a consensus on the control node (leader).
\end{Prm}
\begin{Def}[Admissible Coordination Control Policies]\cite{Vamvoudakisa2012}\label{D7}
Controls $u_i$ ($i\in \mathcal{I}$) are defined as admissible
coordination control policies if coordination controls $u_i$ ($i\in
\mathcal{I}$) not only stabilize the systems (\ref{E6}) on
$\Omega_i\in R^n$ locally, but also make the local
cost functional (\ref{E7}) finite.
\end{Def}

Under the given admissible coordination control policies $u_i$ and $u_{(j)}$, the local value function $V_i(e_i)$
for node $v_i$ is defined by
\begin{align}\label{E8}
V_i(e_i(t))
=& \int_t^\infty r_i(e_i,u_i,u_{(j)})dt \nonumber\\
=& \int_t^\infty (e_i^T Q_{ii} e_i + u_i^T R_{ii} u_i +\sum\limits_{j\in N_i} u_j^TR_{ij}u_j)dt,\nonumber\\
\end{align}
and the local coupled nonlinear Lyapunov equations for (\ref{E6}) are
\begin{align}\label{E9}
0=&H_i(e_i,V_{e_i},u_i,u_{(j)})\nonumber\\
\equiv& r_i(e_i,u_i,u_{(j)})+V_{e_i}^T((L_i+B_i)\otimes I_n)(f_e(t)+g(x)u)\nonumber \\
=&e_i^T Q_{ii} e_i + u_i^T R_{ii} u_i +\sum \limits_{j\in N_i} u_j^TR_{ij}u_j  + V_{e_i}^T \mathcal{L}_i(f_e(t)+g(x) u),
\end{align}
with $\mathcal{L}_i=(L_i+B_i)\otimes I_n$. $V_{e_i}$ is the partial
derivative of the value function $V_i(e_i)$ with respect to
$e_i$.

Meanwhile, the local coupled Hamiltonians of Problem \ref{P1} are
defined by
\begin{align}\label{E10}
H_i(e_i,V_{e_i},u_i,u_{(j)})=& r_i(e_i,u_i,u_{(j)})+V_{e_i}^T\mathcal{L}_i(f_e(t)+g(x)u)\nonumber \\
=&e_i^T Q_{ii} e_i + u_i^T R_{ii} u_i +\sum \limits_{j\in N_i}u_j^TR_{ij}u_j + V_{e_i}^T\mathcal{L}_i(f_e(t)+g(x) u).
\end{align}

According to the necessary condition of optimality principle, we can
obtain
\begin{align}\label{E13a}
u_i=&-\frac{1}{2}R_{ii}^{-1}\left(\frac{\partial u^T}{\partial u_i}\right)g^T(x)\mathcal{L}_i^TV_{e_i}\nonumber \\
=&-\frac{1}{2}R_{ii}^{-1}g_i^T(x_i)((l_{ii}+b_{ii})\otimes
I_n)^TV_{e_i}.
\end{align}

Assume that the local optimal value functions $V^*_i(e_i)$ satisfy
the coupled HJ equations
\begin{eqnarray}\label{E11}
\min \limits_{u_i} H_i(e_i,V^*_{e_i},u_i,u_{(j)})&=&0,
\end{eqnarray}
then, the local optimal coordination controls are
\begin{eqnarray}\label{E13}
u_i^*=-\frac{1}{2}R_{ii}^{-1}g_i^T(x_i)((l_{ii}+b_{ii})\otimes I_n)^TV^*_{e_i}.
\end{eqnarray}

Inserting $u^*_i$ and $u^*_{(j)}$ to (\ref{E9}), we can obtain
\begin{align*}
0=&e_i^TQ_{ii}e_i+u_i^{*T} R_{ii}u^*_i+\sum\limits_{j\in
N_i}u_j^{*T}R_{ij}u_j^*+V_{e_i}^{*T}\mathcal{L}_i(f_{e}(t)+g(x)u^*)\nonumber\\
=&e_i^TQ_{ii}e_i+\frac{1}{4}V_{e_i}^{*T}((l_{ii}+b_{ii})\otimes I_n)g_i(x_i)R_{ii}^{-1}g_i^T(x_i) ((l_{ii}+b_{ii})\otimes I_n)^TV^*_{e_i}\nonumber \\
&+\frac{1}{4}\sum \limits_{j\in N_i}V_{e_j}^{*T}((l_{jj}+b_{jj})\otimes I_n)g_j(x_j)R_{jj}^{-1}R_{ij}R_{jj}^{-1}g_j^T(x_j)((l_{jj}+b_{jj})\otimes I_n)^TV^*_{e_j}+V_{e_i}^{*T}\mathcal{L}_i(f_{e}(t)+g(x)u^*).
\end{align*}

We can rewrite it as the coupled HJ equations (see Appendix A)
\begin{align}\label{E14a}
0=&e_i^TQ_{ii}e_i+\frac{1}{4}V_{e_i}^{*T}((l_{ii}+b_{ii})\otimes I_n)g_i(x_i)R_{ii}^{-1}g_i^T(x_i)((l_{ii}+b_{ii})\otimes I_n)^TV^*_{e_i}\nonumber\\
&+\frac{1}{4}\sum \limits_{j\in N_i}\big( V_{e_j}^{*T}((l_{jj}+b_{jj})\otimes I_n)g_j(x_j)R_{jj}^{-1}R_{ij}R_{jj}^{-1}g_j^T(x_j)((l_{jj}+b_{jj})\otimes I_n)^TV^*_{e_j} \big) \nonumber\\
&+V_{e_i}^{*T}((l_{ii}+b_{ii})\otimes
I_n)(f_{ei}(t)+g_i(x_i)u_i^*)+V_{e_i}^{*T}\sum \limits_{j\in N_i}((l_{ij}+b_{ij})\otimes
I_n)(f_{ej}(t)+g_j(x_j)u_j^*).
\end{align}

Inserting (\ref{E13}) to (\ref{E14a}), we can get

\begin{align}\label{E141}
0=&e_i^TQ_{ii}e_i-\frac{1}{2}V_{e_i}^{*T}((l_{ii}+b_{ii})\otimes I_n)g_i(x_i)R_{ii}^{-1} g_i^T(x_i)((l_{ii}+b_{ii})\otimes I_n)^TV^*_{e_i}\nonumber\\
&+\frac{1}{4}\sum \limits_{j\in N_i}V_{e_j}^{*T}((l_{jj}+b_{jj})\otimes I_n) g_j(x_j)R_{jj}^{-1}R_{ij}R_{jj}^{-1} g_j^T(x_j)((l_{jj}+b_{jj})\otimes I_n)^TV^*_{e_j} \nonumber\\
&+V_{e_i}^{*T}\sum \limits_{j\in \{N_i, i\}}  ((l_{ij}+b_{ij})\otimes I_n) f_{ej}(t) -\frac{1}{2}V_{e_i}^{*T}\sum \limits_{j\in N_i} ((l_{ij}+b_{ij})\otimes I_n)g_j(x_j)R_{jj}^{-1} g_j^T(x_j)((l_{jj}+b_{jj})\otimes I_n)^TV^*_{e_j}.
\end{align}

Note that the optimal value functions $V^*_i(e_i)$ ($i =1,\ldots,N$)
are the solution of equations (\ref{E141}). The optimal coordination
controls (\ref{E13}) can be obtained by $V^*_i(e_i)$. In fact, the
solution of equations (\ref{E141}) is a Nash equilibrium. Their
relationship will be introduced in the next section.

\subsection{Nash Equilibrium}
First, according to \cite{Tijs2003}, we introduce the Nash
equilibrium definition for multi-player games.
\begin{Def} [Global Nash Equilibrium]\label{D8} An $N$-tuple of
control policies $\{u_1^*, u_2^*,\ldots, u_N^* \}$ is referred to as
a global Nash equilibrium solution for an $N$-player game (graph
$\mathcal{G}_x$) if for all $i \in \mathcal{I}$
\begin{align*}
J_i^* \triangleq& J_i(u_1^*,u_2^*,\ldots,u_i^*,\ldots,u_N^*)\nonumber\\
\leq& J_i (u_1^*,u_2^*,\ldots,u_i,\ldots,u_N^*),\quad(u_i\neq
u_i^*).
\end{align*}
The $N$-tuple of the local performance values $\{J_1^*, J_2^*,
\ldots, J_N^* \}$ is known as a Nash equilibrium of the $N$-player
game (graph $\mathcal{G}_x$).
\end{Def}

Then, two important facts are obtained by Theorem \ref{T1} below,
that is, the conclusions (I) and (II).
\begin{Thm}\label{T1} Let $V^*_i(e_i)>0\in
C^1$, $i \in \mathcal{I}$ be a solution to coupled HJ equations
(\ref{E141}), the optimal coordination control policies $u_i^*$
($i\in \mathcal{I}$) be given by (\ref{E13}) in term of  these
solutions $V^*_i(e_i)$. Then
\begin{description}
  \item[(I)] The local neighborhood consensus error systems (\ref{E6})
  are asymptotically stable.
  \item[(II)]  The local performance values $J^*_i(e_i(0), u^*_i, u^*_{(j)})$ are equal to $V^*_i(e_i(0))$, $i\in \mathcal{I}$; and $u_i^*$ and $u_{(j)}^*$ are in Nash equilibrium.
\end{description}
\end{Thm}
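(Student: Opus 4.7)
My plan is to use each $V_i^{*}(e_i)$ as a Lyapunov function for part (I) and then adapt the classical ``completing the squares'' argument, familiar from optimal control, to the cooperative game setting for part (II). The two parts are linked by the Hamiltonian identity (\ref{E9}) evaluated at the optimum.

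\emph{Part (I).} Substitute the closed-loop policies $u^{*}$ into the dynamics (\ref{E6}) and differentiate $V_i^{*}$ along the resulting trajectories:
\[
\dot V_i^{*} = V_{e_i}^{*T}\mathcal{L}_i\bigl(f_e(t)+g(x)u^{*}\bigr).
\]
But the Hamiltonian (\ref{E10}) vanishes at the optimum by (\ref{E11}), so the right-hand side equals $-r_i(e_i,u_i^{*},u_{(j)}^{*})\le -e_i^{T}Q_{ii}e_i$. Since $Q_{ii}>0$ and $V_i^{*}$ is positive definite, this gives asymptotic stability of $e_i=0$ for each local consensus error system. The only subtlety is that cross coupling through neighbors' controls still cancels because the Hamiltonian identity is evaluated at the full optimal profile $u^{*}$, not only at $u_i^{*}$.

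\emph{Part (II), value matching.} Integrating $\dot V_i^{*} = -r_i(e_i,u_i^{*},u_{(j)}^{*})$ on $[0,\infty)$ and invoking $e_i(t)\to 0$ from Part (I) together with $V_i^{*}(0)=0$ yields
\[
V_i^{*}(e_i(0)) = \int_0^{\infty} r_i(e_i,u_i^{*},u_{(j)}^{*})\,dt = J_i^{*}(e_i(0),u_i^{*},u_{(j)}^{*}).
\]
For the Nash inequality, fix the neighbors at $u_{(j)}^{*}$, pick an arbitrary admissible $u_i$, and telescope:
\[
J_i(e_i(0),u_i,u_{(j)}^{*}) = V_i^{*}(e_i(0)) + \int_0^{\infty}\!\!\bigl(r_i(e_i,u_i,u_{(j)}^{*}) + V_{e_i}^{*T}\dot e_i\bigr)dt,
\]
using $V_i^{*}(e_i(\infty))=0$ (which holds provided the deviated policy is admissible and therefore stabilizing). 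Substituting the optimal feedback expression (\ref{E13}) to rewrite $V_{e_i}^{*T}((l_{ii}+b_{ii})\otimes I_n)g_i(x_i)= -2u_i^{*T}R_{ii}$, and using the HJ equation to eliminate the drift and neighbor terms, the integrand collapses to the perfect square $(u_i-u_i^{*})^{T}R_{ii}(u_i-u_i^{*})\ge 0$. Thus $J_i(u_i,u_{(j)}^{*})\ge J_i^{*}$, which is exactly Definition \ref{D8}.

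\emph{Main obstacle.} The technical annoyance is the Kronecker-product bookkeeping in the cross term $V_{e_i}^{*T}\mathcal{L}_ig(x)(u-u^{*})$. Since only the $i$th block of $u$ is perturbed, every off-diagonal summand $((l_{ij}+b_{ij})\otimes I_n)g_j(x_j)(u_j-u_j^{*})$ vanishes and only the diagonal block survives; it is this block that (\ref{E13}) converts into $-2u_i^{*T}R_{ii}(u_i-u_i^{*})$, and this is the one piece that must align exactly with the control-cost difference $u_i^{T}R_{ii}u_i-u_i^{*T}R_{ii}u_i^{*}$ for the squared form to appear. The rest of the proof is essentially routine once this cancellation is verified.
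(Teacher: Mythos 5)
Your proof is correct, and Part (I) is exactly the paper's argument: differentiate $V_i^{*}$ along the closed-loop trajectories and use the Hamiltonian identity (\ref{E9}) at the optimal profile to get $\dot V_i^{*}=-r_i(e_i,u_i^{*},u_{(j)}^{*})<0$, hence $V_i^{*}$ is a Lyapunov function. For Part (II) the paper simply declares the conclusion ``obvious,'' so your completing-the-squares derivation of the Nash inequality --- reducing the cross term to the diagonal block and converting it via (\ref{E13}) into $(u_i-u_i^{*})^{T}R_{ii}(u_i-u_i^{*})\ge 0$ --- actually supplies more detail than the published proof, and the one hypothesis you rightly flag (that the deviated policy still gives $V_i^{*}(e_i(\infty))=0$) is covered by admissibility in Definition \ref{D7}.
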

\begin{proof}
First, the conclusion (I) is proven. Under the conditions, the local
optimal value functions $V^*_i(e_i)>0$ satisfy (\ref{E141}) then
they also satisfy (\ref{E9}). Take the time derivative of
$V^*_i(e_i)$
\begin{align*}
\dot{V}^*_{i}(e_i)=& V^{*T}_{e_i}\dot{e}_i\\
=&V_{e_i}^{*T}\mathcal{L}_i(f_e(t)+g(x)u^*)\\
=&-e_i^T Q_{ii} e_i - u_i^{*T} R_{ii} u^*_i - \sum \limits_{j\in
N_i} u_j^{*T}R_{ij}u^*_j.
\end{align*}

Since $Q_{ii}>0, R_{ii}>0, R_{ij}> 0$ and
$\dot{V}^*_{i}(e_i)<0$. Therefore, ${V}^*_{i}(e_i)$ is a Lyapunov
function for $e_i$. Furthermore, the local neighborhood consensus
error system (\ref{E6}) is asymptotically stable.

The conclusion (II) is obvious, according to the definition of
performance index, value function and Definition \ref{D8}.
\end{proof}
\begin{Rem} In Theorem 1, the part (II) states the fact that the solution of the equation set (\ref{E141}) is the Nash equilibrium. Note that the solution of (\ref{E141}) is not unique. In general, there exist multiple Nash equilibrium. 
In fact, in ADP field, the obtained optimal solution is the local optimum \cite{Cox1998}. The globally optimal solution can not be obtained unless we explore the entire state space. However, in general, it is not possible.
\end{Rem}

Obviously, if only the coupled HJ equations (\ref{E141}) can be
solved, we will obtain the Nash equilibrium for multi-agent systems.
However, due to the nonlinear nature of the coupled HJ equations
(\ref{E141}), obtaining its analytical solution is generally
difficult. Therefore, in the next section, the policy
iteration algorithm is used to solve the coupled HJ equations.

\subsection{Policy Iteration (PI) Algorithm for the Coupled HJ
Equations}\label{sec5} In general, equations (\ref{E141}) are
difficult or impossible to be solved. In the field of ADP and
reinforcement learning, PI algorithm is usually used to obtain the
solution of the HJB equation. Similarly, we solves the coupled HJ
equations by PI algorithm, which relies on repeated policy evaluation
(e.g. the solution of (\ref{E9})) and policy improvement (the
solution of (\ref{E13a})). The iteration process is implemented until
the result of policy improvement no longer changes. If controls of
all the nodes ($i=1,\ldots,N$) do not change under the framework of PI
algorithm, then they are the solution (Nash equilibrium) of the
coupled HJ equations (\ref{E11}) or (\ref{E141}). However, it is
necessary that the initial local coordination control policies must
be admissible control policies in PI algorithm.

\textbf{Policy Iteration Algorithm:} Start with admissible initial policies $u^0_1,\ldots
,u^0_N$.

\begin{Step}\label{Step1} (Policy Evaluation) Given the $N$-tuple of policies
$u^k_1,\ldots ,u^k_N$, solve for $N$-tuple of costs
$V^k_1,\ldots,V^k_N$ using (\ref{E9})
\begin{eqnarray}\label{E15a}
 0=H_i(e_i,V^k_{e_i},u_i^k,u^k_{(j)}), \forall i=1,\ldots,N.
\end{eqnarray}
\end{Step}

\begin{Step} (Policy Improvement) Update the $N$-tuple of
control policies using (\ref{E13a})
\begin{align}\label{E16a}
&u^{k+1}_i=-\frac{1}{2}R_{ii}^{-1}g_i^T(x_i)((l_{ii}+b_{ii})\otimes
I_n)^TV^k_{e_i}, \forall i=1,\ldots,N.\nonumber\\
\end{align}
\end{Step}

Go to step \ref{Step1}.

It does not stops until $u_i$ converge to $u_i^*$, for $\forall i$.

Next, inspired by the linear result in \cite{Vamvoudakisa2012}, we give a theorem to state the convergence of the policy
iteration algorithm for nonlinear case.
\begin{Thm}\label{T1a}(Convergence of Policy Iteration Algorithm). Assume policies of all nodes $i$ are
updated at each iteration in PI algorithm. Then for small 
$\bar{\sigma}(R_{jj}^{-1}R_{ij})$ and big $\underline{\sigma}(R_{ii})$, $u_i$ converges to the Nash
equilibrium and for all $i$, and the value functions converge to the
optimal value functions $V^k_i \rightarrow V^*_i$.
\end{Thm}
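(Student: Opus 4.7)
The plan is to adapt the classical single-agent policy iteration convergence argument to the cooperative multi-player game setting, using the weak-coupling hypothesis on $\bar{\sigma}(R_{jj}^{-1}R_{ij})$ to dominate the cross-interaction terms that are not present in the single-agent case. First, I would show that admissibility propagates under the iteration: starting from admissible $\{u^0_i\}$, the evaluation step (\ref{E15a}) yields a well-defined positive definite $V^k_i$, and the updated controls (\ref{E16a}) again stabilize the local consensus error dynamics (\ref{E6}) on $\Omega_i$. Stabilization under the updated policies follows by differentiating $V^k_i$ along the closed-loop trajectory driven by $u^{k+1}=(u^{k+1}_1,\ldots,u^{k+1}_N)$ and using the Bellman identity (\ref{E15a}) for $V^k_i$ to write $\dot V^k_i$ as minus the stage cost $r_i(e_i,u^k_i,u^k_{(j)})$ plus a correction term in $(u^{k+1}-u^k)$.

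The central calculation is the monotonicity step. I would subtract the two Bellman identities for $V^k_i$ (with controls $u^k$) and compute $V^k_i-V^{k+1}_i$ as a time integral along the closed-loop system driven by $u^{k+1}$. Using the explicit formula (\ref{E16a}), the diagonal $i=j$ contribution collapses by a completion of squares into $(u^{k+1}_i-u^k_i)^T R_{ii}(u^{k+1}_i-u^k_i)\ge 0$, exactly as in the single-agent case. The obstruction comes from the off-diagonal contributions involving $(u^{k+1}_j-u^k_j)^T R_{ij}(u^{k+1}_j-u^k_j)$ for $j\in N_i$ together with the cross terms $a_{ij}(V^k_{e_i})^T g_j(u^{k+1}_j-u^k_j)$. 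I would re-express the second kind of term via (\ref{E16a}) written for node $j$, apply a Young/completion-of-squares inequality, and bound the resulting quadratic forms by $\bar{\sigma}(R_{jj}^{-1}R_{ij})\cdot \|u^{k+1}_j-u^k_j\|^2_{R_{jj}}$. Summing over $i$ and invoking the smallness of $\bar{\sigma}(R_{jj}^{-1}R_{ij})$ and the largeness of $\underline{\sigma}(R_{ii})$ lets me absorb these perturbations into the diagonal gain, giving a simultaneous decrease $V^{k+1}_i\le V^k_i$ for all $i$.

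Once monotonicity is in hand, convergence is almost automatic: each $V^k_i$ is nonincreasing in $k$ and, by Theorem \ref{T1} applied to any admissible iterate, bounded below by the optimal value $V^*_i\ge 0$, so the monotone convergence theorem yields a pointwise limit $V^\infty_i$. Passing $k\to\infty$ in the policy update (\ref{E16a}) produces a limiting control $u^\infty_i$ satisfying the stationarity condition (\ref{E13}) with $V^\infty_i$ in place of $V^*_i$, and passing to the limit in (\ref{E15a}) shows $V^\infty_i$ solves the coupled HJ equations (\ref{E141}). Uniqueness of the positive definite admissible solution on $\Omega_i$, together with part (II) of Theorem \ref{T1}, then identifies $V^\infty_i=V^*_i$ and $\{u^\infty_i\}$ with the Nash equilibrium.

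The main obstacle I anticipate is the monotonicity step. In the single-agent HJB setting the decrease $V^{k+1}\le V^k$ reduces to a single nonnegative quadratic from completion of squares, but here $V^k_i$ depends on all neighbors' controls through $r_i$, and the neighbors' simultaneous updates inject indefinite cross terms into the integrated form of $V^k_i-V^{k+1}_i$. Making the weak-coupling bound quantitative — that is, finding an explicit threshold on $\bar{\sigma}(R_{jj}^{-1}R_{ij})/\underline{\sigma}(R_{ii})$ that renders the sum of off-diagonal perturbations strictly smaller than the diagonal descent across all nodes simultaneously — is the delicate step, and the hypothesis in the theorem statement is exactly what is required to close this inequality.
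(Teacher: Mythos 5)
Your plan follows essentially the same route as the paper's proof: establish the monotone decrease $V^{k+1}_i \le V^k_i$ by expanding the difference of the stage costs at consecutive iterations into a quadratic form in $\Delta u_j = u^{k+1}_j-u^k_j$ plus indefinite cross terms, absorb those cross terms using the smallness of $\bar{\sigma}(R_{jj}^{-1}R_{ij})$ and the largeness of $\underline{\sigma}(R_{ii})$, and then conclude from the fact that the nonincreasing sequence $V^k_i$ is bounded below by $V^*_i\ge 0$. The only difference is one of bookkeeping: the paper subtracts the two Hamiltonians $H_i(e_i,V^{k+1}_{e_i},u^{k+1})-H_i(e_i,V^{k}_{e_i},u^{k})$ to obtain the pointwise relation $\dot V^k_i-\dot V^{k+1}_i=\sum_{j}\Delta u_j^TR_{ij}\Delta u_j+2\sum_{j}(u^k_j)^TR_{ij}\Delta u_j$ and then integrates (so even the diagonal $j=i$ term is handled by the norm bound requiring large $\underline{\sigma}(R_{ii})$), whereas you integrate along the $u^{k+1}$-driven closed loop and collapse the diagonal term exactly by completing the square; both reduce to the same sufficient inequality and the same conclusion.
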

\begin{proof}
By the following facts,
\begin{align*}
&H_i(e_i,V^{k+1}_{e_i},u_i^{k+1},u^{k+1}_{(j)})-H_i(e_i,V^{k}_{e_i},u_i^{k},u^{k}_{(j)})\nonumber\\&=\sum\limits_{j\in \{N_i,i\}}(u^{k+1}_j-u^k_{j})^TR_{ij}(u^{k+1}_j-u^k_{j})+2\sum\limits_{j\in \{N_i,i\}}(u^k_j)^TR_{ij}(u^{k+1}_j-u^k_{j})+\Theta_i,
\end{align*}
where $\Theta_i=\left(V_{e_i}^{k+1} \right )^T \sum\limits_{j\in \{N_i,i\}}
((l_{ij}+b_{ij})\otimes I_n) (f_{ej}(t)+g_j(x_j)u^{k+1}_j)-\left(V_{e_i}^{k} \right )^T \sum\limits_{j\in \{N_i,i\}}
((l_{ij}+b_{ij})\otimes I_n) (f_{ej}(t)+g_j(x_j)u^{k}_j)$, and
\begin{align*}
&H_i(e_i,V^{k+1}_{e_i},u_i^{k+1},u^{k+1}_{(j)})
-H_i(e_i,V^{k}_{e_i},u_i^{k},u^{k}_{(j)})=r_i(e_i,u^{k+1}_i,u^{k+1}_{(j)})-r_i(e_i,u^{k}_i,u^{k}_{(j)}) + \Theta_i,
\end{align*}
we can obtain
\begin{align*}
&r_i(e_i,u^{k+1}_i,u^{k+1}_{(j)})-r_i(e_i,u^{k}_i,u^{k}_{(j)})= \sum\limits_{j\in \{N_i,i\}}(u^{k+1}_j-u^k_{j})^TR_{ij}(u^{k+1}_j-u^k_{j})+2\sum\limits_{j\in
\{N_i,i\}} (u^k_j)^TR_{ij}(u^{k+1}_j-u^k_{j}).
\end{align*}

Since, at $k+1$ and $k$ steps, the time derivative of the local value function can be written respectively as $\dot{V}_i^{k+1}=-r_i(e_i,u^{k+1}_i,u^{k+1}_{(j)})$ and
let $\dot{V}_i^{k} = - r_i(e_i,u^{k}_i,u^{k}_{(j)})$, the above
expression can be rewritten as
\begin{align*}
\dot{V}_i^{k}-\dot{V}_i^{k+1}=&\sum\limits_{j\in \{N_i,i\}}(u^{k+1}_j-u^k_{j})^TR_{ij}(u^{k+1}_j-u^k_{j})+2\sum\limits_{j\in \{N_i,i\}} (u^k_j)^TR_{ij}(u^{k+1}_j-u^k_{j}).
\end{align*}
A sufficient condition for $\dot{V}_i^{k}-\dot{V}_i^{k+1}\geq0$ is
$$\Delta u_j^TR_{ij}\Delta u_j \geq-2 (u^k_j)^TR_{ij}\Delta u_j,  j \in \{N_i,i\},$$ where $\Delta
u_j=u^{k+1}_j-u^k_{j}$, or
\begin{eqnarray*}
\Delta u_j^TR_{ij}\Delta u_j\geq (V_{e_j}^{k-1})^T((l_{jj}+b_{jj})\otimes I_n)g_j(x_j)R_{jj}^{-1}R_{ij}\Delta u_j,\quad  j \in \{N_i,i\}.
\end{eqnarray*}
After taking norms, the above inequality will always hold in
case of
$$ \underline{\sigma}(R_{ij})\|\Delta u_j\|\geq (l_{jj}+b_{jj}) \bar{\sigma}(R_{jj}^{-1}R_{ij})\|V_{e_j}^{k-1}\|\beta_i,  j \in \{N_i,i\}.$$
$\underline{\sigma}(\cdot)$ is the operator which takes the minimum
singular value, and $\bar{\sigma}(\cdot)$ is the operator which
takes the maximum singular value. This holds if
$\bar{\sigma}(R_{jj}^{-1}R_{ij})=0$. By continuity, it also holds
for small values of $\bar{\sigma}(R_{jj}^{-1}R_{ij})$ and big values of $\underline{\sigma}(R_{ii})$.

By integrating the both sides of $\dot{V}^k_i \geq \dot{V}^{k+1}_i$,
it follows that $ V^k_i \geq V^{k+1}_i$ which shows that it is a
nonincreasing function bounded below by zero. Therefore $V^k_i$ is
convergent as $k \rightarrow \infty$. We can write $\lim_{k
\rightarrow \infty} V^k_i = V^{\infty}_i$. According to the
definition of the local value function (\ref{E8}), we have
\begin{align*}
V_i^k \geq& \int_t^\infty (e_i^T Q_{ii} e_i + u_i^{*T} R_{ii} u^*_i+\sum \limits_{j\in N_i} u_j^{*T}R_{ij}u^*_j)dt \equiv V_i^*,
\end{align*}
where $u_i^*$ and $u_{(j)}^*$ are the optimal coordination controls.
Let $k \rightarrow \infty$ then $V_i^\infty\geq V^*_i.$

Since $V^*_i \leq V^{\infty}_i$, the algorithm converges to $V^*_i$,
and obtains the solution to the coupled HJ equations, that is, the
cooperate Nash equilibrium.
\end{proof}
\begin{Rem} The proof of Theorem \ref{T1a} shows that when $j \in N_i$, the node $i$ should
weight the neighborhood control $u_j$ in its performance index $J_i$
relatively less than the node $j$ weights its own control $u_j$ in
$J_j$.  While $\underline{\sigma}(R_{ii})$ should be enough big such that $\underline{\sigma}(R_{ii})\|\Delta u_i\|\geq (l_{ii}+b_{ii}) \|V_{e_i}^{k-1}\|\beta_i$, as $j=i$. Therefore, it is necessary to select the proper
weighting matrices for the local performance in practice. In addition, $\|g_i(x_i)\|$ should have small upper bound.
\end{Rem}

In what follows, the generalized fuzzy hyperbolic models are used to
solve the optimal solution of the coupled HJ equations by adaptive
algorithm. This scheme develops a single-network adaptive
architecture for approximating the solution of (\ref{E11}) or
(\ref{E141}), under the framework of PI algorithm.

\section{GFHM-based Approximate Solutions of the Coupled HJ Equations}\label{sec6}

In this section, our intention is to develop an adaptive algorithm to
approximate the solution of a set of coupled HJ equations
(\ref{E141}). However, the coordination control of each node not
only depends on information of itself, but also depends on that of its
neighborhood nodes, so it is difficult to be handled.

According to Lemma \ref{L1} and Lemma \ref{L2}, we utilize the
generalized fuzzy hyperbolic model to estimate $V_i(e_i)$ for the
first time. We call it as generalized fuzzy hyperbolic critic
estimator (GFHCE), as follows:
\begin{eqnarray}\label{E14}
V_i(e_i)=\theta^T_i \tanh(\Phi_i \bar{e}_{i})+\zeta_i+\varepsilon_i,
\end{eqnarray}
where $\bar{e}_{i}$ is the generalized input variable of ${e}_{i}$;
$\theta_i=[\theta_{i1},\theta_{i2},\ldots,\theta_{im}]^T \in {R}^{m}$ is the unknown ideal weight vector for node $v_i$;$\Phi_i =\diag\{\phi_{ij}\}$ ($j=1,\ldots,m$); $\zeta_i$ is a constant scalar
for node $v_i$, and $\varepsilon_i$ is the GFHCE estimation error
for $V_i(e_i)$. Note that, since $V_i(e_i)$ satisfies the condition of Lyapunov
function, that is, $V_i(0)=0$, ${\zeta}_i$ is set to zero.

\begin{Rem}\label{R6}
Since ${\Phi_i}$ is nonlinear in the parameters (NLIP) in GFHM,
stability analysis of the parameter is cumbersome in some
applications. Fortunately, the GFHM can also be seen as a
three-layer neural network model \cite{Abdollahi2002}, whose
activation function is seen as $\tanh(\cdot)$. If weights ${\Phi_i}$
are fixed, then GFHM is linear in the parameters (LIP) on
$\theta_i$. Here, assume $\Phi_i$ is fixed to $I_{m}$ ($I_{m}$ is an identity matrix).
\end{Rem}

The derivative of the value function $V_i(e_i)$ with respect to
$e_i$ is
\begin{eqnarray}\label{E15}
V_{e_i}=\Lambda_i (\bar{e}_i) \theta_i+ \Delta \varepsilon_i,
\end{eqnarray}
where $\Lambda_i(\bar{e}_i) = [\partial \tanh(\bar{e}_i)/\partial
e_i ]^T$ and $\Delta \varepsilon_i = \partial \varepsilon_i/\partial
e_i$.

Let $\hat{\theta}_i$ be the estimate of $\theta_i$, then we have the
estimates of $V_i(e_i)$ and $V_{e_i}$, as follows:
\begin{eqnarray}\label{E16}
\hat{V}_i(e_i)=\hat{\theta}_i^T \tanh(\bar{e}_i)
\end{eqnarray}
and
\begin{eqnarray}\label{E17}
\hat{V}_{e_i}=\Lambda_i(\bar{e}_i) \hat{\theta}_i.
\end{eqnarray}

Then, the approximate Hamiltonian functions corresponding to
(\ref{E10}) can be derived, as follows:
\begin{align}\label{E18}
\mathfrak{e}_i&=H_i(e_i,\hat{\theta}_i, u_i, u_{(j)})  \nonumber\\
&=r_i(e_i, u_i, u_{(j)})+\hat{V}_{e_i}^T((L_i+B_i)\otimes I_n)(f_e(t)+g(x) u)\nonumber\\
&=e_i^T Q_{ii} e_i + u_i^T R_{ii} u_i + \sum
\limits_{j\in N_i} u_j^TR_{ij}u_j+ \hat{\varphi}_i(\hat{\theta}_i),
\end{align}
where $\hat{\varphi}_i(\hat{\theta}_i) =
({\Lambda}_i(\bar{e}_i)\hat{\theta}_i)^T \sum \limits_{j\in \{ N_i, i\}}((l_{ij} + b_{ij})\otimes I_n)(f_{ej}(t)+g_j(x_j)u_j)$.

Given any admissible coordination control policies $u_i$ and
$u_{(j)}$, the desired $\hat{\theta}_i$ can be selected to minimize the
squared residual error $E_i(\hat{\theta}_i)$
\begin{eqnarray}\label{E19}
E_i(\hat{\theta}_i)=\frac{1}{2}\mathfrak{e}_i^T\mathfrak{e}_i.
\end{eqnarray}

The weight adaptive updating laws for $\hat{\theta}_i$ can be
obtained by the gradient descent algorithm \cite{KyriakosG2010}, as
follows
\begin{align}\label{E20}
\dot{\hat{\theta}}_i
 =-a_i \sigma_i ( \sigma_i^T\hat{\theta}_i+r_i(e_i,u_i,u_{(j)})),
\end{align}
where $a_i > 0$ is the gain of the adaptive updating law for
$\hat{\theta}_i$. $\sigma_i = \Lambda_i^T(\bar{e}_i)\sum \limits_{j\in \{N_i, i\}}((l_{ij}+b_{ij})\otimes I_n)(f_{ej}(t)+g_j(x_j)u_j)$.

\begin{Rem}\label{R7} To make
$\hat{\theta}_i$ converge to the ideal value ${\theta}_i$, the
persistent excitation (PE) condition must be guaranteed. Therefore,
the coordination control policies are mingled with the probing
noise. In general, the hybrid coordination controls require to be
sufficiently rich signals, which contains as least $n/2$ distinct
nonzero frequencies
(see,\cite{Vamvoudakisa2012,KyriakosG2010,Ioannou2006}).
\end{Rem}

Inserting (\ref{E17}) into (\ref{E13a}), the admissible coordination
control policy can be expressed as
\begin{eqnarray}\label{E22}
\hat{u}_i=-\frac{1}{2}R_{ii}^{-1}g_i^T(x_i) ((l_{ii}+b_{ii})\otimes
I_n)^T \Lambda_i(\bar{e}_i) \hat{\theta}_i,
\end{eqnarray}
with the adaptive updating law (\ref{E20}) of $\hat{\theta}_i$ .

Next, we will give the detailed design procedure for solving
equations (\ref{E9}) and (\ref{E13a}) by fuzzy dynamic programming.
Note that the following procedure exists in every iteration step of
PI algorithm.

\textbf{Step 1:} GFHM are used as approximators to estimate the
solution (value functions) of (\ref{E11}). Therefore, it results in
the squired residual errors (\ref{E19}).

\textbf{Step 2:} To minimize the squired residual errors (\ref{E19}),
the gradient descent algorithm is used to obtain the  adaptive
updating laws (\ref{E20}) of the weight $\hat{\theta}_i$.

\textbf{Step 3:} Starting with the initial admissible weights
$\hat{\theta}_i=\theta_i^0$ ($i = 1, \ldots, N$),  the update does
not stop until the weight $\hat{\theta}_i$ converges
($\|\hat{\theta}_i-\theta_i\|< \varepsilon_{\theta_i}$,
$\varepsilon_{\theta_i}$ is the ideal evaluated error).

\section{Stability  Analysis}\label{sec7}
In this section, we give stability analysis for our scheme by the
proof of Theorem \ref{T2}. Before obtaining Theorem \ref{T2}, we
need the following preparations.

Inserting (\ref{E15}) into (\ref{E9}), the following formulation is
obtained
\begin{align*}
0=&H_i(e_i,\theta_i,u_i,u_{(j)})\nonumber\\
=&r_i(e_i,u_i,u_{(j)})+\theta_i^T
\Lambda_i^T(\bar{e}_i)\mathcal{L}_i(f_e(t)+g(x) u)+\Delta\varepsilon_i^T
\mathcal{L}_i(f_e(t)+g(x) u)\nonumber\\
=&r_i(e_i,u_i,u_{(j)})+\theta_i^T \sigma_i + \Delta\varepsilon_i^T
\mathcal{L}_i(f_e(t)+g(x) u).
\end{align*}
Further, we can obtain
\begin{eqnarray}\label{E23}
r_i(e_i,u_i,u_{(j)})+\theta_i^T \sigma_i=\varepsilon_{HJ_i},
\end{eqnarray}
where $\varepsilon_{HJ_i}=-\Delta\varepsilon_i^T
\mathcal{L}_i(f_e(t)+g(x) u)$. $\varepsilon_{HJ_i}$ is the residual
error resulting from the function approximation error.

Define the weight estimation error of the local value function as
$\tilde{\theta}_i=\hat{\theta}_i-\theta_i$. By (\ref{E20}) and
(\ref{E23}), we have
\begin{align}\label{E24}
\dot{\tilde{\theta}}_i=&\dot{\hat{\theta}}_i-\dot{\theta}_i=\dot{\hat{\theta}}_i\nonumber\\
=&-a_i\sigma_i(\sigma_i^T\hat{\theta}_i-\sigma_i^T\theta_i+\varepsilon_{HJ_i})\nonumber\\
=&-a_i\sigma_i(\sigma_i^T{\tilde\theta}_i+\varepsilon_{HJ_i}).
\end{align}

Throughout this section, the following assumptions should hold:
\begin{Asu}\label{Asu2}
\end{Asu}
  \begin{enumerate}
    \item The persistent excitation condition ensures $\sigma_{m_i}<\|\sigma_i\|<\sigma_{M_i}$; and $\|\theta_i\|<\theta_{M_i}$, where $\sigma_{m_i}$, $\sigma_{M_i}$ and $\theta_{M_i}$
  are positive constants;
    \item The error $\|\varepsilon_{HJ_i}\|$ in the coupled HJ equation has upper bound with $\|\varepsilon_{HJ_i}\|<\bar{\varepsilon}_i$, $\bar{\varepsilon}_i$
is a positive constant;
    \item The GFHCE estimation error $\varepsilon_i$ has upper bound with $\|\varepsilon_i\|\leq\varepsilon_{M_i}$; and
        $\|\Delta\varepsilon_i\|<\varepsilon_{\Delta{M_i}}$, $\varepsilon_{M_i}$ and $\varepsilon_{\Delta{M_i}}$ are also positive
constants.
  \end{enumerate}
\begin{Thm}\label{T2}
Consider multi-agent systems (\ref{E1}). Under the coordination
control policies (\ref{E22}) with the weight adaptive updating laws
of $\hat{\theta}_i$ (\ref{E20}), the local consensus errors $e_i$
and the weight estimation errors $\tilde{\theta}_i$ are UUB with the
bounds given by (\ref{E28}) and (\ref{E29}). Meanwhile
the control node trajectory $x_0(t)$ is CUUB, that is, all nodes
reach a consensus on $x_0(t)$. Moreover, the approximation
coordination control input $\hat{u}_i$ is close to the ideal
coordination control input $u_i$, i.e.,
$\|\hat{u}_i-u_i\|\leq\varepsilon_{u_i}$ as $t\rightarrow\infty$,
and $\varepsilon_{u_i}$ is a small positive constant.
\end{Thm}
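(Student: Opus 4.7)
The plan is to build a composite Lyapunov function that simultaneously captures the stability of the consensus error and the convergence of the weight estimates, then show that its derivative along trajectories of the closed-loop multi-agent system is negative outside a compact residual set. Concretely, I would take
\begin{equation*}
L(e,\tilde\theta) \;=\; \sum_{i=1}^{N}\Bigl(V_i^{*}(e_i) \;+\; \tfrac{1}{2a_i}\tilde\theta_i^{T}\tilde\theta_i\Bigr),
\end{equation*}
where $V_i^{*}$ is the optimal local value function guaranteed by Theorem~1 and $\tilde\theta_i=\hat\theta_i-\theta_i$. The first summand inherits positive definiteness in $e_i$ from the proof of Theorem~1 (part I), while the second is a standard quadratic weight-error penalty normalized by the adaptive gain so that the update law (\ref{E20}) produces a clean term upon differentiation.

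Next I would differentiate $L$ along the actual closed-loop trajectories driven by the approximate controls $\hat u_i$ in (\ref{E22}), not by the ideal $u_i^{*}$. For the weight part, plugging (\ref{E24}) into $\tfrac{1}{a_i}\tilde\theta_i^{T}\dot{\tilde\theta}_i$ yields $-\|\sigma_i^{T}\tilde\theta_i\|^{2}-\tilde\theta_i^{T}\sigma_i\varepsilon_{HJ_i}$; using Young's inequality and the PE bound $\|\sigma_i\|\ge\sigma_{m_i}$ from Assumption~\ref{Asu2} gives a dominant $-\tfrac12\sigma_{m_i}^{2}\|\tilde\theta_i\|^{2}$ term plus a residual proportional to $\bar\varepsilon_i^{2}$. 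For the value-function part, I would write $\dot V_i^{*}(e_i)=V_{e_i}^{*T}\mathcal{L}_i(f_e+g(x)u^{*})+V_{e_i}^{*T}\mathcal{L}_i g(x)(\hat u-u^{*})$, recognize the first piece as $-r_i(e_i,u_i^{*},u_{(j)}^{*})$ by the HJ equation (\ref{E9}), and expand $\hat u_j-u_j^{*}=-\tfrac12 R_{jj}^{-1}g_j^{T}((l_{jj}+b_{jj})\otimes I_n)^{T}(\Lambda_j\tilde\theta_j-\Delta\varepsilon_j)$ so that the second piece is a bilinear form in $V_{e_i}^{*}$ and the neighborhood weight errors $\{\tilde\theta_j:j\in\{N_i,i\}\}$ plus a term bounded by $\varepsilon_{\Delta M_j}$.

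Aggregating over $i$, applying Young's inequality on each cross term, and collecting common factors produces an inequality of the form
\begin{equation*}
\dot L \;\le\; -\sum_i\Bigl(\underline\sigma(Q_{ii})\|e_i\|^{2} + \kappa_i\|\tilde\theta_i\|^{2}\Bigr) + \sum_i\bigl(\eta_i^{e}\|e_i\| + \eta_i^{\theta}\|\tilde\theta_i\| + c_i\bigr),
\end{equation*}
where $\kappa_i,\eta_i^{e},\eta_i^{\theta},c_i$ are explicit constants depending on $\sigma_{m_i}$, $\bar\varepsilon_i$, $\varepsilon_{\Delta M_i}$, $\beta_i$, and the singular values of $R_{jj}^{-1}R_{ij}$. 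Completing the square node by node yields $\dot L<0$ whenever $\|e_i\|$ or $\|\tilde\theta_i\|$ exceeds explicit thresholds $\mathcal{B}_{e_i}$ and $\mathcal{B}_{\tilde\theta_i}$ matching (\ref{E28})--(\ref{E29}), which is the UUB conclusion. The CUUB claim on $x_0(t)$ then follows immediately from (\ref{E5}): since $\mathcal{L}=(L+B)\otimes I_n$ is nonsingular under the strongly connected, pinned digraph assumption, one has $\|x-\underline x_0\|\le\|\mathcal{L}^{-1}\|\,\|e\|$, giving a bound $\mathcal{C}$ compatible with Definition~\ref{D6}. The bound on $\|\hat u_i-u_i\|$ is a direct consequence of the explicit expression for $\hat u_j-u_j^{*}$ above together with the UUB bound on $\tilde\theta_i$ and the smallness of $\Delta\varepsilon_i$.

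The main obstacle will be controlling the cross-coupling: each $\dot V_i^{*}$ contains the weight errors of \emph{all} neighbors, so the bilinear terms $V_{e_i}^{*T}\mathcal{L}_i g_j\Lambda_j\tilde\theta_j$ must be absorbed into the diagonal negative terms without blowing up the constants. This is exactly the place where the assumptions on $\bar\sigma(R_{jj}^{-1}R_{ij})$ being small and $\underline\sigma(R_{ii})$ being large (the same hypotheses used in Theorem~\ref{T1a} for convergence of PI) have to be invoked so that the quadratic form in $(\|e\|,\|\tilde\theta\|)$ stays negative definite. Once those algebraic smallness conditions close up, the rest of the argument is a routine UUB calculation via Lyapunov-based comparison.
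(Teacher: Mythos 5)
Your proposal is sound in outline but takes a genuinely different route from the paper in the value-function half of the argument. The paper's Lyapunov candidate is $L_i=\tr(\tilde\theta_i^T\tilde\theta_i)/2a_i+e_i^Te_i+2\Gamma_i V_i(e_i)$, with an explicit quadratic term in $e_i$ and a \emph{tunable} gain $\Gamma_i$ multiplying the value function evaluated under the \emph{applied} controls; it then uses the Lyapunov identity (\ref{E9}) to write $\dot V_i=-r_i(e_i,u_i,u_{(j)})$ and, crucially, never expands $\hat u_j-u_j^*$ in terms of $\tilde\theta_j$. Instead, the neighbor cross terms $2e_i^T((l_{ij}+b_{ij})\otimes I_n)g_ju_j$ are absorbed by the $-2\Gamma_i\sum_j u_j^TR_{ij}u_j$ penalty already present in $r_i$, by taking $\Gamma_i$ large; this is what completely decouples the weight-error dynamics from the error dynamics and yields the clean quadratic-plus-constant inequality behind (\ref{E28})--(\ref{E29}). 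Your version keeps only $V_i^*$, splits $\dot V_i^*$ into the HJ part plus $V_{e_i}^{*T}\mathcal{L}_ig(\hat u-u^*)$, and must then absorb bilinear couplings between $V_{e_i}^*$ and the neighbors' $\tilde\theta_j$. That does work --- $V_{e_i}^*$ is bounded under the GFHM representation (\ref{E15}) and Assumption \ref{Asu2}, so the coupling is linear in $\|\tilde\theta_j\|$ and completing the square absorbs it into the $-q_i\|\tilde\theta_i\|^2$ terms regardless of the size of $\bar\sigma(R_{jj}^{-1}R_{ij})$; your appeal to the smallness conditions of Theorem \ref{T1a} is therefore unnecessary here (those conditions are for PI convergence, and the only knobs the paper actually tunes in this proof are $a_i$ and $\Gamma_i$). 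The price of your route is that the resulting ultimate bounds involve $\theta_{M_i}$, $\varepsilon_{\Delta M_i}$ and $\beta_j$ from the $\hat u-u^*$ expansion and cannot literally coincide with (\ref{E28})--(\ref{E29}), whose numerators contain the $\sum_j\|((l_{ij}+b_{ij})\otimes I_n)f_{ej}(t)\|^2$ terms generated precisely by the $2e_i^T\dot e_i$ piece your candidate omits; so you prove UUB, but not with the stated bounds. On the other hand, your CUUB step via invertibility of $(L+B)\otimes I_n$ for a strongly connected pinned digraph is more explicit than the paper's one-line assertion, and the weight-error and $\|\hat u_i-u_i\|$ portions match the paper essentially verbatim.
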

\begin{proof} We choose the local Lyapunov functions candidate, as
follows
\begin{eqnarray}\label{E25}
L_i=L_{1_i}+L_{2_i},
\end{eqnarray}
where $L_{1_i}=\tr(\tilde{\theta}_i^T\tilde{\theta}_i)/2a_i $ and
$L_{2_i}=e_i^Te_i+2\Gamma_i V_i(e_i)$, with $\Gamma_i>0$.

According to Assumption \ref{Asu2} and (\ref{E24}), the time
derivative of the Lyapunov function candidate (\ref{E25})
is $$\dot{L}_i=\dot{L}_{1_i}+\dot{L}_{2_i},$$ where
\begin{align*}
\dot{L}_{1_i}=&\frac{1}{a_i}\tr(\tilde{\theta}_i^T\dot{\tilde{\theta}}_i)\nonumber\\
=&\frac{1}{a_i}(\tilde{\theta}_i^T(-a_i\sigma_i(\sigma_i^T\tilde{\theta}_i+\varepsilon_{HJ_i})))\nonumber\\
=&-\tilde{\theta}_i^T\sigma_i\sigma_i^T\tilde{\theta}_i-2\frac{a_i}{\sqrt{2a_i}}\tilde{\theta}_i^T\sigma_i\frac{1}{\sqrt{2a_i}}\varepsilon_{HJ_i}.\nonumber
\end{align*}
Also, since $\tilde{\theta}_i^T\sigma_i\sigma_i^T\tilde{\theta}_i >
0$, there exists $q_i>0$ such that $q_i \|\tilde{\theta}_i\|^2 \leq
\tilde{\theta}_i^T\sigma_i\sigma_i^T\tilde{\theta}_i$
($q_i\leq\|\sigma_i\|^2$). 
Then,
\begin{align}\label{E26}
\dot{L}_{1_i}\leq&-q_i\|\tilde{\theta}_i\|^2+\frac{a_i}{2}\|\sigma_i\|^2\|\tilde{\theta}_i\|^2+\frac{1}{2a_i}\bar{\varepsilon}_i^2\nonumber\\
\leq&(-q_i+\frac{a_i}{2}\|\sigma_i\|^2)\|\tilde{\theta}_i\|^2+\frac{1}{2a_i}\bar{\varepsilon}_i^2.
\end{align}
And
\begin{align}\label{E27}
\dot{L}_{2_i}=&2e_i^T\dot{e_i}+2\Gamma_i \dot{V}_i(e_i)\nonumber\\
=&2e_i^T\mathcal{L}_i(f_e(t)+g(x)u)-2\Gamma_ir_i(e_i,u_i,u_{(j)})\nonumber\\
=&2e_i^T\mathcal{L}_i(f_e(t)+g(x)u)-2\Gamma_i (e_i^T Q_{ii} e_i +
u_i^T R_{ii} u_i + \sum\limits_{j\in N_i} u_j^TR_{ij}u_j)\nonumber\\
=& \sum\limits_{j\in \{N_i,i\}} 2e_i^T((l_{ij}+b_{ij})\otimes
I_n)(f_{ej}(t)+g_j(x_j)u_j)-2\Gamma_i e_i^T Q_{ii} e_i -2\Gamma_i \sum\limits_{j\in \{N_i,i\}} u_j^TR_{ij}u_j \nonumber\\
=& \sum\limits_{j\in \{N_i,i\}} 2e_i^T((l_{ij}+b_{ij})\otimes I_n)f_{ej}(t)+\sum\limits_{j\in \{N_i,i\}} 2e_i^T((l_{ij}+b_{ij})\otimes
I_n)g_j(x_j)u_j\nonumber\\
&-2\Gamma_i e_i^T Q_{ii} e_i  -2\Gamma_i \sum\limits_{j\in \{N_i,i\}} u_j^TR_{ij}u_j\nonumber\\
\leq& 
\sum\limits_{j\in \{N_i,i\}}(\|((l_{ij}+b_{ij})\otimes I_n)\beta_j\|^2-2\Gamma_i \lambda_{\min}(R_{ij}))\|u_j\|^2+(2(\bar{N}_i+1)-2\Gamma_i \lambda_{\min}(Q_{ii}))\|e_i\|^2 \nonumber\\
&+ \sum\limits_{j\in\{N_i,i\}}\|((l_{ij}+b_{ij})\otimes I_n)f_{ej}(t)\|^2,
\end{align}
where $\bar{N}_i$ is the number of neighbors of node $i$.

Then, we can obtain
\begin{align*}
\dot{L}_i\leq&(-q_i+\frac{a_i}{2}\|\sigma_i\|^2)\|\tilde{\theta}_i\|^2+\frac{1}{2a_i}\bar{\varepsilon}_i^2+(2(\bar{N}_i+1)-2\Gamma_i \lambda_{\min}(Q_{ii}))\|e_i\|^2\nonumber\\
&+ \sum\limits_{j\in\{N_i,i\}}\|((l_{ij}+b_{ij})\otimes I_n)f_{ej}(t)\|^2+\sum\limits_{j\in \{N_i,i\}}(\|((l_{ij}+b_{ij})\otimes
I_n)\beta_j\|^2-2\Gamma_i \lambda_{\min}(R_{ij}))\|u_j\|^2.
\end{align*}

If $a_i$ and $\Gamma_i$ are selected to satisfy
\begin{align*}
&0<a_i<\frac{2q_i}{\|\sigma_{i}\|^2}, \nonumber\\
\end{align*}
\begin{align*}
&\Gamma_i > \max \left\{
\frac{\bar{N}_i+1}{\lambda_{\min}(Q_{ii})},
\left\{\frac{\|((l_{ij}+b_{ij}) \otimes
I_n)\beta_j\|^2}{2\lambda_{\min}(R_{ij})}, j \in \{{N}_i,i\}\right\}
\right\}
\end{align*}
and for large $\|\tilde{\theta}_i\|$ and $\|e_i\|$, the following inequalities 
\begin{align}\label{E28}
\|\tilde{\theta}_i\| >&\sqrt{ \frac{\sum\limits_{j\in \{N_i,i\}}\|((l_{ij}+b_{ij})\otimes I_n)f_{ej}(t)\|^2+\frac{1}{2a_i}\bar{\varepsilon}_i^2}{(q_i-\frac{a_i}{2}\sigma_{M_i}^2)}}
\triangleq b_{\tilde{\theta}_i}
\end{align}
or
\begin{align}\label{E29}
\|e_i\| >& \sqrt{\frac{\sum\limits_{j=\{N_i,i\}}\|((l_{ij}+b_{ij})\otimes I_n)f_{ej}(t)\|^2+\frac{1}{2a_i}\bar{\varepsilon}_i^2}{2\Gamma_i\lambda_{\min}(Q_{ii})-2(\bar{N}_i+1)}}
\triangleq b_{e_i}
\end{align}
hold, then $\dot{L}_i<0$. Therefore, using Lyapunov-like theorem
(see, \cite{Khalil2002} and \cite{Lewis1999}), it can conclude that
the local consensus errors $e_i$ and weight estimation errors
$\tilde{\theta}_i$ are uniformly ultimately bounded (UUB). Then
$x_0(t)$ is CUUB.

Next, we will prove $\|\hat{u}_i-u_i\|\leqslant\varepsilon_{u_i}$,
as $t\rightarrow \infty$. Recalling the expression of $u_i$ together
with (\ref{E13a}) and (\ref{E22}), we have
\begin{align}\label {E30}
\hat{u}_i-u_i=&-\frac{1}{2}R_{ii}^{-1}g_i^T(x_i)((l_{ii}+b_{ii})\otimes
I_n)^T(\hat{V}_{e_i}-V_{e_i}) \nonumber \\
=&-\frac{1}{2}R_{ii}^{-1}g_i^T(x_i)((l_{ii}+b_{ii})\otimes
I_n)^T(\Lambda_i(\bar{e}_i)\tilde{\theta}_i-\Delta\varepsilon_i).
\end{align}
When $t\rightarrow \infty$, the upper bound of (\ref{E30}) is
\begin{align*}
\|\hat{u}_i-u_i\| =&\frac{1}{2}\|R_{ii}^{-1}g_i^T(x_i)((l_{ii}+b_{ii})\otimes
I_n)^T(\Lambda_i(\bar{e}_i)\tilde{\theta}_i-\Delta\varepsilon_i)\|\\
\leq& 
\frac{1}{2}\|R_{ii}^{-1}g_i^T(x_i) ((l_{ii}+b_{ii})\otimes I_n)^T \|\sqrt{\|\Lambda_i(\bar{e}_i)\tilde{\theta}_i\|^2-2\Delta\varepsilon_i^T\Lambda_i(\bar{e}_i)\tilde{\theta}_i+\|\Delta\varepsilon_i\|^2}\\
\leq&\frac{1}{2}\|R_{ii}^{-1}g_i^T(x_i)
((l_{ii}+b_{ii})\otimes I_n)^T\| \sqrt{2\|\tilde{\theta}_i\|^2+2\|\Delta\varepsilon_i\|^2}\leq \varepsilon_{u_i},
\end{align*}
where $$\varepsilon_{u_i}=\frac{\sqrt{2}}{2}\|R_{ii}^{-1}g_i^T(x_i) ((l_{ii}+b_{ii})\otimes I_n)^T\|\sqrt{(b_{\tilde{\theta}_i}^2+\varepsilon_{\Delta
{M_i}}^2)}$$.
\end{proof}
\begin{Rem}
Since there exists $q_i \leq\|\sigma_i\|^2$, we can get $\frac{2q_i}{\|\sigma_i\|^2}\leq 2$. Let $\delta_i \in [0,2)$, then $\frac{2q_i}{\|\sigma_i\|^2}= 2-\delta_i$. Further, we can obtain $0<a_i<2-\delta_i $. Therefore, we can set the value $a_i$ by experience in the interval $(0,2)$.
\end{Rem}

\section{Simulation}
In this section, we illustrate the effectiveness of our scheme by a numerical example, and design the optimal coordination control by (\ref{E22}) for multi-agent systems.

Here, we consider the five-node digraph structure with leader node connected to node 3, such as Fig.\ref{F1}. The edge weights and the pinning gain in (\ref{E4}) were chosen as one.
\begin{figure}[!t]
\centering
\includegraphics[width=1.5in]{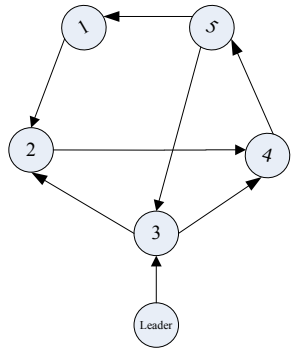}
\caption{The structure of five-node digraph with leader node}
\label{F1}
\end{figure}

For the structure in Fig.\ref{F1}, each node dynamic is considered as\\
Node 1:{
\begin{align*}
\dot{x}_{11}&=x_{12} - x_{11}^2x_{12}\\
\dot{x}_{12}&=-(x_{11}  + x_{12})(1-x_{11})^2 + x_{12}^2u_1,
\end{align*}}
Node 2:{
\begin{align*}
\dot{x}_{21}&=x_{22} - x_{21}^2x_{22}\\
\dot{x}_{22}&=-(x_{21}  + x_{22})(1-x_{21})^2 + 1.5x_{22}^2u_2,
\end{align*}}
Node 3:{
\begin{align*}
\dot{x}_{31}&=x_{32} - x_{31}^2x_{32}\\
\dot{x}_{32}&=-(x_{31}  + x_{32})(1-x_{31})^2 - 0.2x_{32}^2u_3,
\end{align*}}
Node 4:{
\begin{align*}
\dot{x}_{41}&=x_{42} - x_{41}^2x_{42}\\
\dot{x}_{42}&=-(x_{41}  + x_{42})(1-x_{41})^2 + 0.3x_{42}^2u_4,
\end{align*}}
Node 5:{
\begin{align*}
\dot{x}_{51}&=x_{52} - x_{51}^2x_{52}\\
\dot{x}_{52}&=-(x_{51}  + x_{52})(1-x_{51})^2 - 0.9x_{52}^2u_5,
\end{align*}}
While the state trajectory of the leader node is{
\begin{align*}
\dot{x}_{01}&=x_{02} - x_{01}^2x_{02}\\
\dot{x}_{02}&=-(x_{01}  + x_{02})(1-x_{01})^2.
\end{align*}}
Let $Q_{ii}=\left[
              \begin{array}{cc}
                1 & 0 \\
                0 & 1 \\
              \end{array}
            \right]$, $i=1,\ldots,5$; $R_{ii}=8.5$,
$i=1,\ldots,5$; $R_{ij}=0.1 (i\neq j)$, $i,j=1,\ldots,5$ (Note that
$R_{ij}=0$, if the ${v}_j$ is not the neighbor of ${v}_i$) and
$a_i=0.1$, $i=1,\ldots,5$. Here, for the sake of simplicity, we let
$\Phi=I_n$ and the generalized input variable be $e_i$.

Our intention is to design the optimal coordination control, making $x_i$ reach a consensus on leader and minimizing the cost functional (\ref{E7}). By our method, we can obtain an optimal coordination control
\begin{eqnarray}\label{E31}
 u_i^*=-\frac{1}{2}R_{ii}^{-1}g_i^T(x_i)(({l}_{ii}+b_{ii})\otimes I_n){\Lambda_i(e_i)} {\theta}^*_i,\nonumber\\ i=1,\ldots,5.
\end{eqnarray}

We can see the evolution of weight $\hat{\theta}_i$ from Fig. \ref{F2}. Obviously, after {10s}, $\hat{\theta}_i$ converge to the ideal values.

\begin{figure}[!t]
\centering
\includegraphics[width=3in]{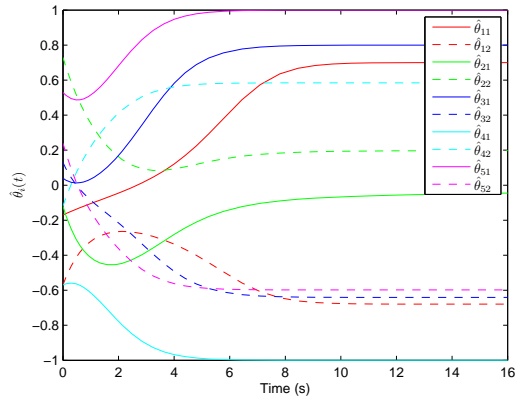}
\caption{{The convergence of GFHCE weight}} \label{F2}
\end{figure}

\begin{figure}[!t]
\centering
\includegraphics[width=3in]{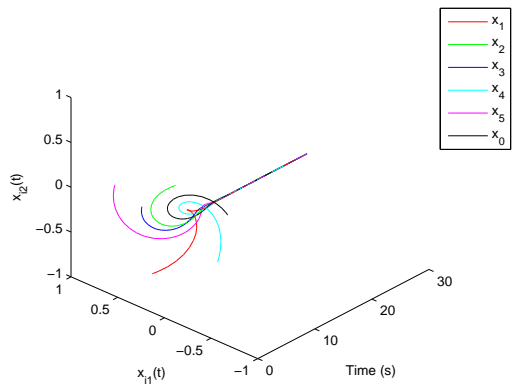}
\caption{{The evolution of the agent states}} \label{F3}
\end{figure}

\begin{figure}[!t]
\begin{center}
\begin{minipage}{160mm}
\centering
\subfigure[]{
\resizebox*{7cm}{!}{\includegraphics{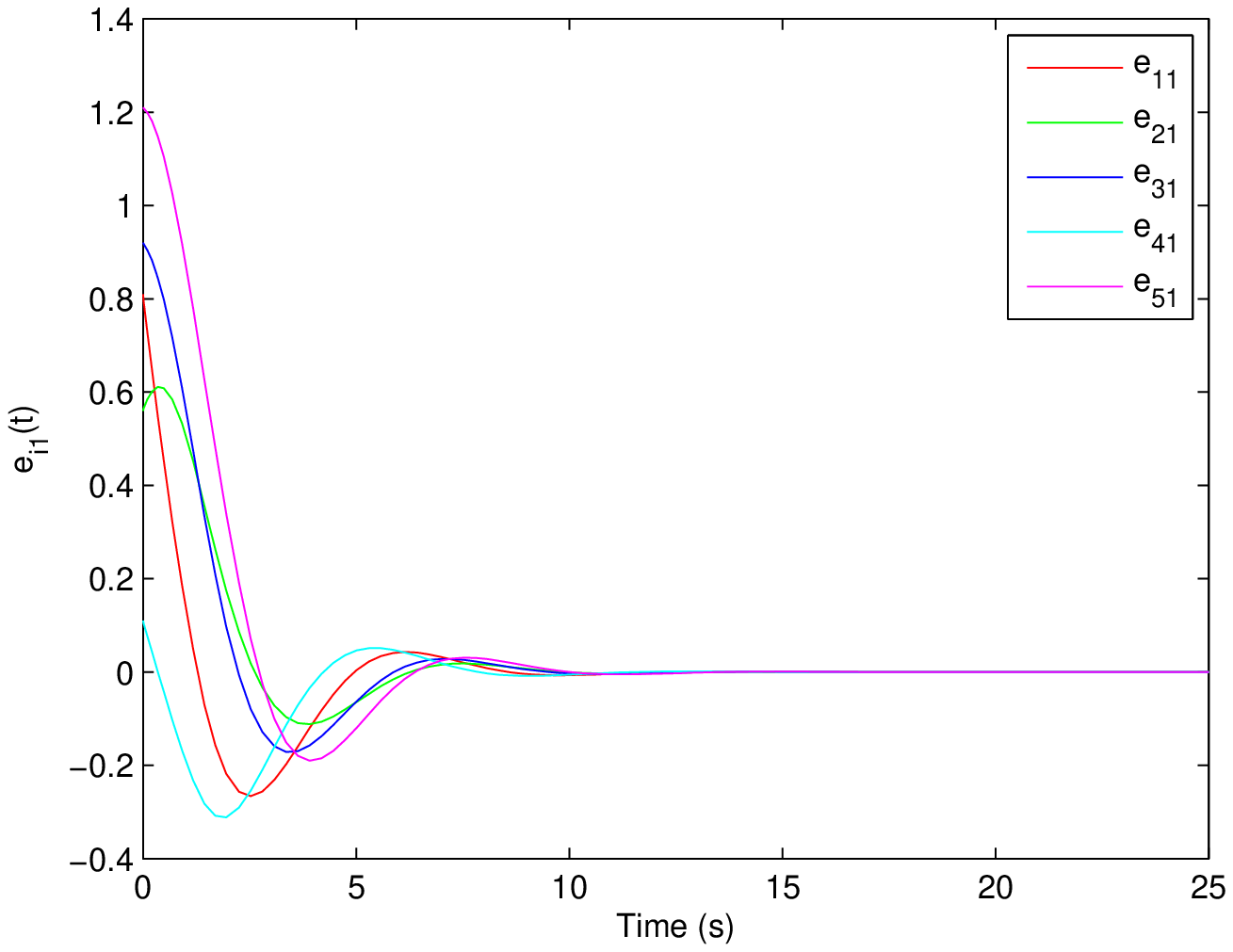}}}%
\centering
\subfigure[]{
\resizebox*{7cm}{!}{\includegraphics{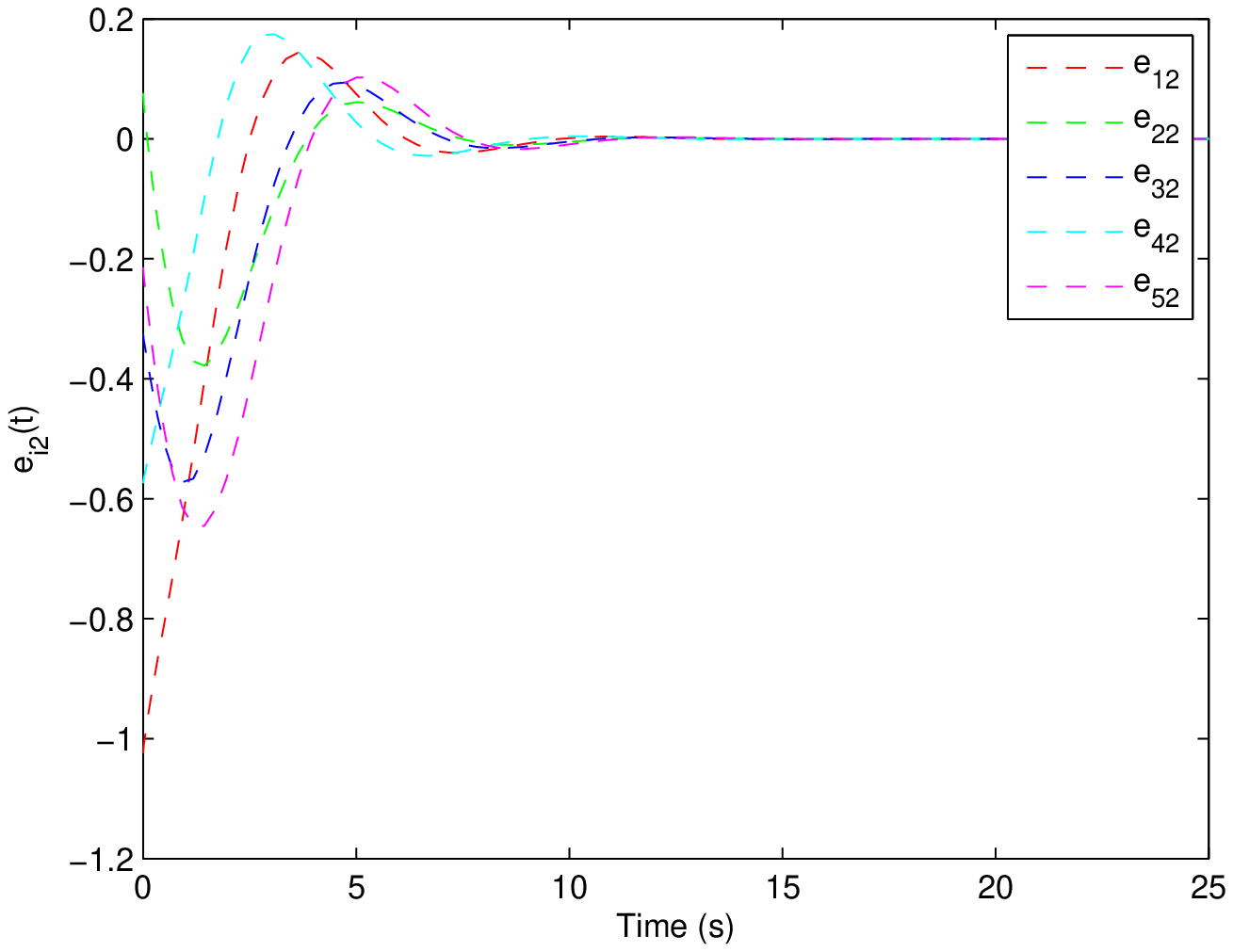}}}%
\end{minipage}
\end{center}
\caption{{The evolution of the local consensus error: (a) error $e_{i1}$ and (b) error $e_{i2}$.}}\label{F4}%
\end{figure}

Fig. \ref{F3} depicts the evolution of the every agent's state under the
optimal coordination controls (\ref{E31}) with the obtained ideal
value ${\theta}_i$. After {15s}, the state of each node reaches a
consensus on leader node.

Fig. \ref{F4} shows the evolution of the local consensus error under the
optimal coordination controls (\ref{E31}) with the obtained ideal
value ${\theta}_i$. After {15s}, the consensus error $e_i$ goes to
zero.
\begin{Rem}
Obviously, only one GFHM is used as the approximator for each
agent's critic network (value function), whereas the action network
presented in \cite{Zhang2011} and \cite{KyriakosG2010} is not
required. It eliminates the action network architecture. Therefore,
our scheme has greater advantage for multi-agent systems, comparing
with dual network method in \cite{Zhang2011,KyriakosG2010} and
\cite{Jagannathan2009}.
\end{Rem}

\section{Conclusion}
In this paper, the optimal coordination control has been presented
for multi-agent systems by fuzzy adaptive dynamic programming. Our
scheme is to approximate the solution of the coupled Hamilton-Jacobi
equation, making use of the single GFHM as approximator, rather than
using the dual-network model appeared in \cite{Zhang2011} and
\cite{KyriakosG2010}, under the framework of the PI algorithm. Then
the approximation solution has been utilized to obtain the optimal
coordination control. Since our method eliminates the action network
model and reduces the number of weights updated, FADP based on the
single GFHM has been a better scheme to design the optimal
coordination control for multi-agent systems. An example has been
presented to show the effectiveness of our scheme.

\section*{Acknowledgement}
The authors would like to thank the AE and reviewers for their valuable comments and suggestions that certainly improved the quality of the paper. We also thank the technical support from Zhenwei Liu and Ranran Li in the process of modifying. Simultaneously, we are also obliged to Dr. Feisheng Yang and Hongjing Liang for thier constructive help in checking our manuscript.

\appendices
\section{}
In order to prove the equation (\ref{E14a}) are equivalent to the
equation
\begin{align*}
0=&e_i^TQ_{ii}e_i+\frac{1}{4}V_{e_i}^{*T}((l_{ii}+b_{ii})\otimes I_n)g_i(x_i)R_{ii}^{-1}g_i^T(x_i)((l_{ii}+b_{ii})\otimes I_n)^TV^*_{e_i}\nonumber\\
&+\frac{1}{4}\sum \limits_{j\in N_i}V_{e_j}^{*T}((l_{jj}+b_{jj})\otimes I_n) g_j(x_j)R_{jj}^{-1}R_{ij}R_{jj}^{-1}g_j^T(x_j)((l_{jj}+b_{jj})\otimes I_n)^TV^{*}_{e_j}\nonumber\\
&+V_{e_i}^{*T}\mathcal{L}_i(f_{e}(t)+g(x)u^*),
\end{align*}
we have to deduce $V_{e_i}^{*T}((l_{ii}+b_{ii})\otimes I_n)(f_{ei}(t)+g_i(x_i)u_i^*)+V_{e_i}^{*T}\sum \limits_{j\in N_i}((l_{ij}+b_{ij})\otimes I_n)(f_{ej}(t)+g_j(x_j)u_j^*)$  in (\ref{E14a}) from the term $V_{e_i}^{*T}\mathcal{L}_i(f_{e}(t)+g(x)u^*)$. According to the previous definition of the paper, we know
$f_e(t)=    [ f_{e1}^T(t),  \cdots,   f_{ei}^T(t),  \cdots,    f_{ej}^T(t),  \cdots,   f_{eN}^T(t)]^T,$
$g(x)=\diag \{g_{1}(x_1),    \cdots,   g_{i}(x_i),   \cdots,    g_{j}(x_j),   \cdots,   g_{N}(x_N)\},$
$u^*=[u_1^{*T}, \cdots, u_i^{*T}, \cdots, u_j^{*T}, \cdots,
u_N^{*T}]^T, L_i=[l_{i1},\ldots,l_{ii}\ldots,l_{ij}\ldots,l_{iN}]$
and $B$ is a diagonal matrix, while let
$B_i=[b_{i1},\ldots,b_{ii}\ldots,b_{ij}\ldots,b_{iN}]$. Because
$\mathcal{L}_i=(L_i+B_i)\otimes I_n$, we can obtain
$\mathcal{L}_i=[(l_{i1}+b_{i1})\otimes
I_n,\ldots,(l_{ii}+b_{ii})\otimes I_n,\ldots,(l_{ij}+b_{ij})\otimes
I_n,\ldots,(l_{iN}+b_{iN})\otimes I_n]$. We also obtain
\begin{eqnarray*}
f_{e}(t)+g(x)u^*=
                       \left[
                         \begin{array}{c}
                           f_{e1}(t)+g_1(x_1)u_1^* \\
                           \vdots \\
                           f_{ei}(t)+g_i(x_i)u_i^* \\
                           \vdots \\
                           f_{ej}(t)+g_j(x_j)u_j^* \\
                           \vdots \\
                           f_{eN}(t)+g_N(x_N)u_N^* \\
                         \end{array}
                       \right].
\end{eqnarray*}
Therefore,
\begin{align}\label{AE35}
&V_{ei}^{*T}\mathcal{L}_i(f_{e}(t)+g(x)u^*)\nonumber\\
=&V_{ei}^{*T}((l_{i1} + b_{i1})\otimes
I_n)(f_{e1}(t)+g_1(x_1)u_1^*)+\ldots+ V_{ei}^{*T}((l_{ii} + b_{ii})\otimes I_n)(f_{ei}(t)+g_i(x_i)u_i^*)\nonumber\\
&+\ldots+ V_{ei}^{*T}((l_{ij} + b_{ij})\otimes I_n)(f_{ej}(t)+g_j(x_j)u_j^*)+\ldots +V_{ei}^{*T}((l_{iN} + b_{iN})\otimes I_n)(f_{eN}(t)+g_N(x_N)u^*_N)\nonumber\\
=&V_{ei}^{*T}\sum \limits_{j\in \mathcal{I}}((l_{ij}+b_{ij})\otimes
I_n)(f_{ej}(t)+g_j(x_j)u_j^*)\nonumber\\
=&V_{ei}^{*T}((l_{ii}+b_{ii})\otimes
I_n)(f_{ei}(t)+g_i(x_i)u_i^*)+V_{ei}^{*T}\sum \limits_{j\in \mathcal{I},i\neq
j}((l_{ij}+b_{ij})\otimes I_n)(f_{ej}(t)+g_j(x_j)u_j^*).
\end{align}

Note that the expression includes all nodes in graph
$\mathcal{G}_x$. When the node $v_j$ is not the neighbor of $v_i$,
$l_{ij}=0(i\neq j)$. Because $B=[b_{ij}]$ is a diagonal
matrix, $b_{ij}=0$, as $i\neq j$ (in order to understand easily, we retain
$b_{ij}$ item). Therefore, removing the items which are not neighbors of $v_i$, the expression (\ref{AE35})
can be rewritten as
\begin{align*}
&V_{ei}^{*T}\mathcal{L}_i(f_{e}(t)+g(x)u^*)=V_{e_i}^{*T}((l_{ii}+b_{ii})\otimes I_n)(f_{ei}(t)+g_i(x_i)u_i^*)+V_{e_i}^{*T}\sum \limits_{j\in N_i}((l_{ij}+b_{ij})\otimes I_n)(f_{ej}(t)+g_j(x_j)u_j^*).
\end{align*}
So, the {expression} (\ref{E14a}) holds.





\end{document}